\newtheorem  {theorem}            {Theorem}
\newtheorem  {proposition}            {Proposition}
\newtheorem  {lemma}              {Lemma}
\newtheorem  {remark}             {Remark}
\newtheorem {remark*}            {Remark}
\newtheorem* {conjecture*}        {Conjecture}
\newtheorem* {theorem*}           {Theorem}
\newtheorem* {acknowledgements*}  {Acknowledgements}
\newcommand {\Ric}  {\operatorname{Ric}}
\newcommand {\R}    {\operatorname{R}}
\renewcommand {\div} {\operatorname {div}}
\newcommand {\tr} {\operatorname {tr}}
\newcommand{\be}{\begin{equation}}
\newcommand{\ee}{\end{equation}}
\def\p{\partial}
\def\R{\mathbb{R}}
\def\tr{{\rm tr}}
\def\p{\partial}
\def\lf{\left}
\def\ri{\right}
\def\R{\Bbb R}
\def\Ric{\text{\rm Ric}}
\def\div{\text{\rm div}}
\def\Pi{\displaystyle{\mathbb{II}}}
\begin{document}

\title{Extension of a theorem of Shi and Tam}

\begin{abstract}
In this note, we prove the following generalization of a theorem of Shi and Tam \cite{ShiTam02}: Let $(\Omega, g)$ be an $n$-dimensional ($n \geq 3$) compact Riemannian manifold, spin when $n>7$, with non-negative scalar curvature and mean convex boundary. If every boundary component $\Sigma_i$ has
positive scalar curvature and embeds isometrically as a mean convex star-shaped hypersurface ${\hat \Sigma}_i \subset \R^n$, then 
\begin{equation*} 
 \int_{\Sigma_i} H \ d \sigma \le \int_{{\hat \Sigma}_i} \hat{H} \ d {\hat \sigma}
\end{equation*}
where $H$ is the mean curvature of $\Sigma_i$ in $(\Omega, g)$, $\hat{H}$ is the Euclidean mean curvature of ${\hat \Sigma}_i$ in  $\R^n$, and where $d \sigma$ and $d {\hat \sigma}$ denote the respective volume forms. Moreover, equality 
holds for some boundary component $\Sigma_i$ if, and only if, $(\Omega, g)$ is isometric to a domain in $\R^n$.
 
In the proof, we make use of a foliation of the exterior of the $\hat \Sigma_i$'s in $\R^n$ by the $\frac{H}{R}$-flow studied by Gerhardt \cite{Gerhardt90} and Urbas \cite{Urbas90}. We also carefully establish the rigidity statement in low dimensions without the spin assumption that was used in \cite{ShiTam02}. 
\end{abstract}

\author{Michael Eichmair$^1$, Pengzi Miao$^2$, and Xiaodong Wang$^3$}

\address{Department of Mathematics, MIT, 77 Massachusetts Avenue, Cambridge, MA 02139, USA}
\email{eichmair@math.mit.edu}

\address{Department of Mathematics, University of Miami, Coral Gables, FL 33146, USA;
School of Mathematical Sciences, Monash University, Victoria, 3800, Australia}
\email{p.miao@math.miami.edu,
Pengzi.Miao@sci.monash.edu.au}

\address{Department of Mathematics, Michigan State University,
East Lansing, MI 48824, USA}
\email{xwang@math.msu.edu}

\thanks{$^1$ Research partially supported by Australian Research Council Discovery Grant \#DP0987650 and by the NSF grant DMS-0906038. }
\thanks{$^2$ Research partially supported by Australian Research Council Discovery Grant \#DP0987650.}
\thanks{$^3$ Research partially supported by the NSF grant DMS-0905904.}

\date{}

\maketitle

\markboth{Michael Eichmair, Pengzi Miao, and Xiaodong Wang}
{A generalization of a theorem of Shi and Tam}

\section{Introduction and statement of results}

In the work of Shi and Tam \cite[Theorem 4.1]{ShiTam02}, the positive mass theorem was used in a novel way to yield beautiful results on the boundary behavior of compact Riemannian manifolds with non-negative scalar curvature:

\begin{theorem} {\rm (Shi-Tam)} \label{thm: ShiTam} Let $(\Omega, g)$ be an $n$-dimensional compact Riemannian spin manifold with non-negative scalar curvature and mean convex boundary. 
If every component $\Sigma_i$ of the boundary is isometric to a strictly convex hypersurface ${\hat \Sigma}_i \subset \R^n$, then 
\begin{equation} 
\label{eqn: main theorem} \int_{\Sigma_i} H \ d \sigma \le \int_{{\hat \Sigma}_i} \hat{H} \ d {\hat \sigma}
\end{equation}
where $H$ is the mean curvature of $\Sigma_i$ in $(\Omega, g)$, $\hat{H}$ is the Euclidean mean curvature of ${\hat \Sigma}_i$ in  $\R^n$, and where $d \sigma$ and $d {\hat \sigma}$ denote the respective volume forms. Moreover, equality in (\ref{eqn: main theorem}) holds for some boundary component $\Sigma_i$ if, and only if, $(\Omega, g)$ is isometric to a domain in $\R^n$.
\end{theorem}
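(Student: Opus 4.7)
The plan is to carry out the Shi-Tam quasi-local mass construction. The strategy is to conformally deform the Euclidean exterior of each $\hat\Sigma_i$ to a scalar-flat metric whose inner boundary matches, both metrically and in mean curvature, the corresponding boundary component of $(\Omega,g)$; glue the pieces along these matched boundaries to obtain an asymptotically flat manifold with non-negative scalar curvature in a distributional sense; apply the spin positive mass theorem; and translate the non-negativity of the ADM mass into the mean-curvature defect on the boundary.

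First I would foliate the exterior of each $\hat\Sigma_i$ in $\R^n$ by its parallel hypersurfaces $\hat\Sigma_i(t)$, $t\ge 0$. Strict convexity of $\hat\Sigma_i$ ensures that the outward normal exponential map is a diffeomorphism onto the closed exterior, so the Euclidean metric takes the warped-product form $\hat g=dt^2+g_t$ with each $\hat\Sigma_i(t)$ strictly convex and asymptotically round. I then look for a new metric on the cylinder of the form $\tilde g=u(t,\cdot)^2\,dt^2+g_t$: a direct computation shows that vanishing scalar curvature of $\tilde g$ is a strictly parabolic equation for $u$ along the flow, uniformly parabolic precisely because the leaves are convex. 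Prescribing $u(0,\cdot)=\hat H/H$ on $\hat\Sigma_i$, parabolic theory (as in the original Shi-Tam argument) yields a unique smooth positive solution on $[0,\infty)$ tending to $1$ at infinity. Since $\partial_t$ is the $\tilde g$-unit normal to $\hat\Sigma_i(t)$ after rescaling by $u$, the $\tilde g$-mean curvature of $\hat\Sigma_i$ inside the deformed exterior equals $\hat H/u(0,\cdot)=H$.

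Gluing $(\Omega,g)$ to each deformed exterior along $\Sigma_i\cong\hat\Sigma_i$ produces an asymptotically flat manifold $(M,\tilde g)$ that is smooth away from the corner hypersurfaces $\Sigma_i$, with matching induced metrics and matching mean curvatures across each $\Sigma_i$. Mollifying $\tilde g$ in a thin collar and conformally correcting (Miao's smoothing argument) produces a nearby smooth metric of non-negative scalar curvature whose ADM mass lies within $o(1)$ of $m_{ADM}(\tilde g)$; the spin positive mass theorem, applicable because $M$ inherits a spin structure from $\Omega$, then gives $m_{ADM}(\tilde g)\ge 0$. The heart of the argument is a monotonicity computation. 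Define
\begin{equation*}
m(t)=\frac{1}{2(n-1)\omega_{n-1}}\int_{\hat\Sigma_i(t)} \hat H_t\lf(1-\frac{1}{u(t,\cdot)}\ri)d\sigma_t,
\end{equation*}
and, differentiating in $t$ using the PDE for $u$ together with the evolution of $\hat H_t$ and $d\sigma_t$ under the parallel flow, show after integrating by parts on each leaf that $m'(t)\le 0$. The key algebraic ingredient is a Newton-Maclaurin-type inequality for the elementary symmetric functions of the principal curvatures of a strictly convex hypersurface, which converts the scalar-curvature term generated by the warping into a manifestly non-positive integrand. A standard asymptotic computation on rounding leaves identifies $\lim_{t\to\infty}m(t)=m_{ADM}(\tilde g)$, and assembling the chain
\begin{equation*}
\frac{1}{2(n-1)\omega_{n-1}}\int_{\hat\Sigma_i}(\hat H-H)\,d\hat\sigma=m(0)\ge m_{ADM}(\tilde g)\ge 0
\end{equation*}
gives (\ref{eqn: main theorem}).

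For the rigidity statement, equality for some $\Sigma_i$ forces $m_{ADM}(\tilde g)=0$, and the rigidity half of the spin positive mass theorem then gives $(M,\tilde g)\cong\R^n$, placing $(\Omega,g)$ isometrically inside $\R^n$. The main obstacle I expect is the monotonicity step: this is where the convexity of the leaves, the precise parabolic PDE satisfied by $u$, and the sign of the scalar curvature of $g_t$ must combine via a symmetric-function inequality to produce a pointwise non-positive derivative of $m(t)$. A secondary but technical obstacle is the corner smoothing needed before applying the PMT; this requires Miao's approximation together with a quantitative estimate showing that the smoothed ADM mass converges to $m_{ADM}(\tilde g)$ as the mollification parameter tends to zero.
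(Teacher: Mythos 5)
Your proposal follows essentially the same route as Shi--Tam's proof, which the paper outlines in its introduction and re-derives in Sections 2--3: the distance-surface foliation of the exterior of each strictly convex $\hat\Sigma_i$, the parabolic prescribed-scalar-curvature equation for $u$ with $u(0,\cdot)=\hat H/H$, the monotonicity of the quasi-local quantity $m(t)$ with $\lim_{t\to\infty}m(t)=m_{ADM}$, gluing, and the positive mass theorem. For the inequality your argument is correct (one small remark: the monotonicity does not need any Newton--Maclaurin inequality; it comes from the Gauss equation, which converts the integrand into $-K(1-u)^2u^{-1}$, with $K>0$ automatic for convex leaves, and the Laplacian term integrating away on the closed leaf).

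There is, however, a genuine gap in your rigidity step. You establish $m_{ADM}(\tilde g)\ge 0$ by mollifying the corner and applying the smooth spin positive mass theorem to nearby metrics whose masses converge to $m_{ADM}(\tilde g)$; but in the equality case this only tells you that the smoothed masses tend to zero, and you then invoke ``the rigidity half of the spin positive mass theorem'' for $(M,\tilde g)$ itself, a metric that is merely Lipschitz across the matching hypersurfaces, where the smooth rigidity statement does not apply. This is precisely the delicate point: one needs a positive mass theorem \emph{with rigidity} for metrics with corners/Lipschitz singularities --- in the spin case this is exactly what Shi and Tam prove in \cite[Section 3]{ShiTam02} via a Witten-type spinor argument that also handles the distributional corner term (and it is what the present paper must redo carefully, without spin, in its Section 4). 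Moreover, even after knowing the mass vanishes, the efficient path is to use the strict monotonicity ($K>0$) to force $u\equiv 1$, hence $H=\hat H$ on $\Sigma_i$ and a genuinely Euclidean exterior, and only then apply the corner rigidity statement to conclude that $(\Omega,g)$ itself is flat and isometric to a domain in $\R^n$; concluding flatness of $\Omega$ directly from ``$m_{ADM}(\tilde g)=0$'' without such a corner-rigidity theorem is not justified by the smoothing argument you set up.
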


\noindent Note that every compact strictly convex hypersurface ${\hat \Sigma}$ as above bounds a compact region in $\R^n$. Throughout this paper, the (scalar) mean curvature of the boundary of a set is computed with respect to the outward unit normal. With this convention, the boundary of a unit ball in $\R^n$ has mean curvature $(n-1)$. As a consequence of the Weyl embedding problem solved by Pogorelov \cite{Pog} and Nirenberg \cite{Nirenberg} independently, in dimension $n = 3$ the assumption in Theorem \ref{thm: ShiTam} that every boundary component $\Sigma_i$ embeds isometrically as a strictly convex hypersurface in $\R^3$ is equivalent to the requirement that the boundary of $(\Omega, g)$ has positive Gaussian curvature. Hence,  the conclusion (\ref{eqn: main theorem}) in the theorem shows that positively curved mean convex boundaries in time-symmetric initial data sets 
satisfying the dominant energy condition
have non-negative Brown-York mass \cite{BrownYork93}. This result has been generalized to subsets of general initial data sets by M. Liu and S.-T. Yau in \cite{LiuYau03} \cite{LiuYau06}, and to a hyperbolic setting by M.-T. Wang and S.-T. Yau in \cite{WangYau07}. 
In dimensions $n>3$, there are no analogous intrinsic conditions on the boundary of $(\Omega, g)$ that guarantee that its components embed isometrically into $\R^n$. 

There are two major ingredients in Shi and Tam's proof of Theorem \ref{thm: ShiTam}. 
For simplicity, let us assume that the boundary of $\Omega$ has only one component. Let
$\iota: \Sigma:=\partial\Omega\rightarrow\mathbb{R}^{n}$ be its isometric
embedding. Let $\nu:$ $\iota(\Sigma)\rightarrow\mathbb{S}^{n-1}$ be the outer unit
normal. Since $\iota (\Sigma)$ is assumed to be a strictly convex
hypersurface in $\mathbb{R}^{n}$ there is a smooth family of embeddings
\[
F: \Sigma \times [0, \infty] \rightarrow\mathbb{R}^{n} \text{ where } F_{t}\left(  \sigma \right)
=F(\sigma, t )=\iota \left(  \sigma \right)  +t\nu\left( \iota(\sigma)\right).
\]
Note that $F_{t} (\Sigma)$ are the `outer' distance surfaces of $\iota (\Sigma)$.
If $\hat \Omega$ denotes the bounded domain enclosed by $\iota (\Sigma)$, then $\{F_t(\Sigma)\}_{t\geq0}$ foliates $\mathbb{R}^{n}\backslash\hat{\Omega}$ and the Euclidean metric on this set can be written as
\[
\mathcal{G}=dt^{2}+g_{t},
\]
where $g_{t}$ is the first fundamental form of the embedding $F_{t}
:\Sigma\rightarrow\mathbb{R}^{n}$. \ Shi and Tam then find an asymptotically flat scalar flat
metric 
\[
\widetilde{\mathcal{G}}=u^{2}dt^{2}+g_{t}
\]
on $\mathbb{R}^{n}\backslash\hat{\Omega}$ such that the mean curvature of $\Sigma \times \{0\}$ in 
$(\mathbb{R}^{n}\backslash\hat{\Omega}, \widetilde{\mathcal{G}})$
 coincides with the mean curvature of $\Sigma$ in $(\Omega, g)$. The function 
 $u: \Sigma \times [0, \infty) \rightarrow (0, \infty)$ is obtained as the solution of a certain nonlinear heat equation. 

The second crucial ingredient of their proof is the observation that
\begin{equation}\label{totalH}
m(t)=\int_{\Sigma\times\{t\}}{H}_{t}\left(  1-u^{-1}\right)  d\sigma_{t},
\end{equation}
is a non-increasing function of $t \geq 0$ where ${H}_{t}$ is the mean curvature of $F_{t} (\Sigma)$
and $d\sigma_{t}$ is its volume form, and that the limit of $m(t)$ as $t\rightarrow\infty$ is the ADM mass of $\widetilde{\mathcal{G}}$. The final step
in their proof is to apply the positive mass theorem for spin manifolds to the
asymptotically flat manifold obtained from gluing $(\Omega, g)$ and 
$(\mathbb{R}^{n}\backslash\hat{\Omega},\widetilde{\mathcal{G}})$
along their boundaries.

Given Theorem \ref{thm: ShiTam}, a natural question to ask is whether the requirement that the embeddings of the boundary components be strictly convex is really necessary. In this paper we present some
variations on Shi-Tam's method. We prove that their theorem continues to hold provided each boundary component can be embedded isometrically into $\R^n$ in such a way that the unbounded component of the complement of the embedded surface is foliated by mean convex leaves of positive scalar curvature. Moreover, we note that the spin assumption can be dropped when $3\leq n\leq 7$. To be precise, we have the following

\begin{proposition} \label{prop: main} The conclusion of Theorem \ref{thm: ShiTam} remains valid if the spin assumption is dropped in dimensions $3 \leq n \leq 7$ and if the hypothesis that every boundary component can be embedded isometrically as a strictly convex hypersurface in $\R^n$ is eased to the requirement that every boundary component is isometric to an embedded surface ${\hat \Sigma_i} \subset \R^n$ that  can be deformed to a strictly convex hypersurface via an `expanding' flow through embedded mean convex hypersurfaces of positive scalar curvature. That is, if there exists a smooth map 
$ F : {\Sigma_i} \times [0, 1] \rightarrow \R^n $
such that $ F(\Sigma_i, 0) = {\hat \Sigma_i} $, $F({\Sigma_i}, t)$ is a smooth embedded, mean convex hypersurface with positive scalar curvature for every $t \in [0, 1]$, $\frac{ \p F}{\p t} = \eta \nu$ where $ \nu$ is the outward unit normal of $F( {\Sigma_i}, t)$ and where $\eta >0$ is a smooth positive function, and $F({\Sigma_i}, 1) \subset \R^n$ is a strictly convex hypersurface. 
\end{proposition}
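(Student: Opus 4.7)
My plan is to follow the two-step scheme of \cite{ShiTam02}, with the parallel foliation replaced by one built from the given flow $F$, and Witten's spinor-based positive mass theorem replaced by the Schoen-Yau PMT (and its rigidity) when $3\leq n\leq 7$. For notational simplicity I assume $\partial\Omega$ is connected and write $\Sigma = \partial\Omega$.

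I first construct a smooth foliation $\{\Sigma_t\}_{t\geq 0}$ of $\R^n\setminus\hat\Omega$. For $t\in[0,1]$ I set $\Sigma_t = F_t(\Sigma)$; the leaf $\Sigma_1$ is strictly convex, so extending by the outer parallel flow from $\Sigma_1$ gives a foliation of the complement of the domain bounded by $\Sigma_1$ by strictly convex hypersurfaces, each of which is mean convex with positive intrinsic scalar curvature. After a routine smoothing at the join, the Euclidean metric pulled back to $\Sigma\times[0,\infty)$ takes the form $\G = \eta^2\,dt^2 + g_t$, with $g_t$ the first fundamental form of $\Sigma_t$, $\eta>0$ smooth, and every leaf satisfying $R^{g_t}>0$ and $H_t>0$.

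Next I adapt the parabolic construction of Shi-Tam: seek $u>0$ on $\Sigma\times[0,\infty)$ such that $\tilde\G = u^2\eta^2\,dt^2 + g_t$ is scalar-flat, with $u(\cdot,0)=H_0/H$ (so the mean curvature of $\Sigma\times\{0\}$ in $\tilde\G$, with respect to the normal pointing out of $\Omega$, equals $H$) and $u\to 1$ at infinity so that $\tilde\G$ is asymptotically flat. The scalar-flat condition is a nonlinear parabolic equation for $u$ in the $t$-variable; mean convexity of the leaves ensures parabolicity, and positivity of $R^{g_t}$ provides the sign needed for global a priori estimates, long-time existence, and the decay $u-1=O(t^{-(n-2)})$. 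Setting
\[
m(t) \;=\; \int_{\Sigma_t} H_t\bigl(1-u^{-1}\bigr)\,d\sigma_t,
\]
one has $m(0) = \int_{\hat\Sigma}\hat H\,d\hat\sigma - \int_\Sigma H\,d\sigma$, the Shi-Tam integration-by-parts identity (whose essential positivity comes from $R^{g_t}>0$) yields $m'(t)\leq 0$, and an analysis of the asymptotics of $u$ gives $\lim_{t\to\infty} m(t) = c_n\,m_{\mathrm{ADM}}(\tilde\G)$ with $c_n>0$. Gluing $(\Omega,g)$ to $(\R^n\setminus\hat\Omega,\tilde\G)$ along their isometric boundaries produces an asymptotically flat manifold with Lipschitz metric across the join, smooth elsewhere, and non-negative scalar curvature in the distributional sense (thanks to the matching of mean curvatures). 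In the range $3\leq n\leq 7$ the Schoen-Yau PMT (no spin needed) gives $m_{\mathrm{ADM}}(\tilde\G)\geq 0$, which combined with monotonicity yields the desired inequality; in dimensions $n\geq 8$ Witten's argument still applies under the spin hypothesis inherited from \cite{ShiTam02}.

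The main obstacle, as the paper's abstract already signals, is the rigidity statement in low dimensions without spin. Equality forces $m_{\mathrm{ADM}}(\tilde\G)=0$ and $m(t)\equiv m(0)$, and we must conclude that $(\Omega,g)$ is isometric to a domain in $\R^n$. The Schoen-Yau rigidity for smooth AF metrics of zero mass gives flatness in the smooth case, but our glued metric is only Lipschitz across $\Sigma$. I would handle this by a Miao-type smoothing procedure: approximate the Lipschitz metric by smooth metrics whose scalar curvatures are non-negative up to an error tending to zero and whose ADM masses tend to zero, apply the smooth Schoen-Yau rigidity in the limit to deduce flatness of the glued manifold, and then transfer the conclusion back to $(\Omega,g)$ via the matching boundary data to obtain the required isometric embedding.
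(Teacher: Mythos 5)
Your inequality argument is essentially the same in spirit as the paper's, with one structural difference worth noting. You build a single foliation of $\R^n\setminus\hat\Omega$ by concatenating the given flow $F$ on $[0,1]$ with the outer parallel flow from the strictly convex leaf $F(\Sigma,1)$, and then solve the quasi-spherical equation on all of $\Sigma\times[0,\infty)$, re-deriving the decay of $u$ at infinity. The paper instead solves the parabolic equation (Proposition \ref{prop-warped}) only on the finite collar $\Sigma_i\times[0,1]$, applies the monotonicity (Proposition \ref{prop: monontone}) there, and then invokes the original Shi--Tam construction as a black box on the exterior $N_i$ of the strictly convex hypersurface $F(\Sigma_i,1)$; the final manifold is glued from $(\Omega,g)$, the $p$ collars $(\Sigma_i\times[0,1],g_{u_i})$, and the $p$ Shi--Tam ends $(N_i,g_i)$. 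This avoids both the ``routine smoothing at the join'' you need (the normal speed $\eta$ is $1$ on $[1,\infty)$ and generically not $1$ as $t\to 1^-$, so the concatenated foliation is only $C^0$ in $t$ without reparametrization and matching of derivatives) and the work of re-establishing the asymptotic expansion of $u$. Modulo these technicalities, both routes give the inequality via Miao's positive mass theorem for metrics with corners when $3\leq n\leq 7$, and via the spin PMT with Lipschitz singularities otherwise.

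The genuine gap is in your rigidity argument. You propose to smooth the Lipschitz metric à la Miao, observe the masses of the smoothed metrics tend to $0$, and ``apply the smooth Schoen--Yau rigidity in the limit.'' This step fails: the smoothing-plus-conformal-correction produces smooth AF metrics $\mathcal{G}_\delta$ of non-negative scalar curvature whose ADM masses converge to $\mathfrak{m}(\mathcal{G})=0$, but none of them has mass exactly zero, and the Schoen--Yau rigidity statement applies only when the mass vanishes identically. There is no stability result converting ``mass $\to 0$'' into ``metric $\to$ flat'' in a topology strong enough to pass to the limit, and the $C^0$ convergence $\mathcal{G}_\delta\to\mathcal{G}$ is far too weak. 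The paper explicitly flags that the rigidity case of the PMT for non-spin AF manifolds with Lipschitz singularities is not in the literature, and its Section \ref{sec: rigidity} circumvents this by a completely different chain of arguments: a GMT lemma (Lemma \ref{lem: GMT lemma}) to replace the remaining boundary components by a homologous minimal surface; Lemma \ref{lem: ShiTam with minimal boundaries} to extend the inequality to manifolds with an extra minimal boundary via doubling; Lemma \ref{lma: R-zero-Omega_i}, which deduces $R(g)\equiv 0$ by a conformal perturbation that would otherwise force a strictly negative mass; Lemma \ref{lem: extension MiaoShiTam}, a variational argument giving $\Ric\equiv 0$; and Lemma \ref{lem: variant of HangWang}, a Reilly/B\^ochner argument à la Hang--Wang producing the isometric embedding. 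Without some version of this chain, the equality case of the proposition is not established by your proposal.
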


This proposition makes it possible to try some other foliations even when the embeddings are not convex. In particular, if each $ \hat{\Sigma}_i $ 
is a star-shaped surface with positive scalar curvature and positive mean curvature, we can use the result of Gerhardt \cite{Gerhardt90} and Urbas \cite{Urbas90} to obtain 
a foliation of $\mathbb{R}^{n} \backslash \hat{\Omega}_i$. This leads to

\begin{theorem} \label{thm: main theorem} The conclusion of Theorem \ref{thm: ShiTam} remains valid if the assumption that every boundary component embeds as a strictly convex hypersurface in $\R^n$ is relaxed to the requirement that the boundary of $(\Omega, g)$ has positive scalar curvature and that each boundary component is isometric to a mean-convex, star-shaped hypersurface in $\R^n$. Moreover, the spin assumption can be dropped in dimensions $3 \leq n \leq 7$.
\end{theorem}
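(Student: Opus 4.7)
The plan is to reduce Theorem \ref{thm: main theorem} to Proposition \ref{prop: main} by exhibiting, for each boundary component $\hat \Sigma_i$, an explicit expanding flow through embedded mean convex hypersurfaces of positive scalar curvature that ends at a strictly convex hypersurface in $\R^n$. All other hypotheses of Proposition \ref{prop: main} are already contained in those of Theorem \ref{thm: main theorem}, so the construction of this flow is the only thing required.

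The natural candidate is the inverse curvature flow
\[
\frac{\p F}{\p t} = \frac{H}{R}\, \nu
\]
studied by Gerhardt \cite{Gerhardt90} and Urbas \cite{Urbas90}, where $R$ denotes the scalar curvature of the induced metric on the leaf; via the Gauss equation in $\R^n$ one has $R = H^2 - |A|^2 = 2\sigma_2(\kappa)$, so the speed is $\sigma_1/(2\sigma_2)$. The Gerhardt-Urbas theory applies precisely to star-shaped initial data with $H>0$ and $R>0$, which is exactly our assumption. It yields a smooth solution for all $t \ge 0$ such that every leaf remains a smooth, embedded, star-shaped hypersurface with $H > 0$ and $R > 0$, and such that the appropriately rescaled leaves converge in $C^\infty$ to a round sphere as $t \to \infty$. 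Since strict convexity is an open condition on the Weingarten operator and holds on a round sphere, this $C^2$ convergence after rescaling forces the unrescaled leaf $F(\hat \Sigma_i, t_1)$ to be strictly convex for all sufficiently large finite $t_1$.

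Fix such a $t_1$ for each $i$, restrict the flow to $[0,t_1]$, and reparametrize linearly to $[0,1]$. The result is a smooth map $F : \hat \Sigma_i \times [0,1] \to \R^n$ whose leaves are embedded, mean convex, and of positive scalar curvature, whose velocity has the form $\eta \nu$ with $\eta = H/R > 0$, and which connects $\hat \Sigma_i$ at $t=0$ to a strictly convex hypersurface at $t=1$. Proposition \ref{prop: main} then delivers both the inequality \eqref{eqn: main theorem} and the rigidity statement, with no spin assumption needed when $3 \leq n \leq 7$.

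Essentially all the real work is done by the previously cited Gerhardt-Urbas results: long-time existence, preservation of star-shapedness and of the curvature positivity conditions $H>0$ and $R>0$, and $C^\infty$ convergence of the rescaled flow to a sphere. The only point one must check beyond a direct citation is the passage from that rescaled convergence to strict convexity of an unrescaled leaf at finite time, which is the most delicate piece of the argument but follows once $C^2$ closeness to a large round sphere is established. Beyond this, the argument is a clean packaging of the flow into the format required by Proposition \ref{prop: main}.
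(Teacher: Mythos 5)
Your proposal is correct and follows essentially the same route as the paper: Section~\ref{Gerhardt-Urbas} of the paper invokes exactly the Gerhardt--Urbas flow with speed $f = R/H = 2\sigma_2/\sigma_1$ (in the paper's normalization) on the cone where $H>0$ and $R>0$, uses the $C^\infty$ convergence of $e^{-t}F(\Sigma,t)$ to a round sphere to conclude strict convexity of the unrescaled leaf at some finite time, and then feeds the resulting finite-time expanding flow into Proposition~\ref{prop: main}. The only cosmetic difference is that the paper leaves the verification that $f$ satisfies the Gerhardt--Urbas structural hypotheses (degree-one homogeneity, monotonicity, concavity, vanishing on $\partial\Gamma$) as an ``entertaining exercise,'' while you gesture at it via the $\sigma_1/(2\sigma_2)$ identification; both treatments are at the same level of rigor.
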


This paper is organized as follows.  In section \ref{BST-construction}, we discuss the generalized quasi-spherical metric construction of Bartnik \cite{Bartnik93}, Smith-Weinstein \cite{SmithWeinstein04}, and Shi-Tam \cite{ShiTam02} 
along any finite-time geometric flow in the generality that is appropriate for our needs (see also \cite{Neves}, where this construction is carried out along a foliation of inverse mean curvature flow), and we derive the crucial monotonicity of  \eqref{totalH}. In section \ref{pfofthm}, we prove Proposition \ref{prop: main} with the spin assumption, and point out that the inequality in Proposition \ref{prop: main} remains true without  the spin assumption.  In section \ref{sec: rigidity}, we examine carefully the equality case in Proposition \ref{prop: main} without the spin assumption. It appears that the rigidity case of the positive mass theorem \cite{SchoenYau79} and \cite{Schoen87} on non-spin asymptotically flat manifolds \emph{with Lipschitz singularities} as in \cite{ShiTam02}, \cite{Miao02} is not addressed in the literature, so we derive it carefully in section \ref{sec: rigidity}. In section \ref{Gerhardt-Urbas}, we  review the works of Gerhardt \cite{Gerhardt90} and Urbas \cite{Urbas90}, which together with Proposition \ref{prop: main}, imply Theorem \ref{thm: main theorem}.

\section{Generalized quasi-spherical metric construction and the monotonicity of the Brown-York mass}

In this section we review the results in \cite{Bartnik93}, \cite{SmithWeinstein04}, \cite{ShiTam02} on the construction of certain asymptotically flat, scalar flat metrics defined outside smooth compact sets $\hat \Omega \subset \R^n$ such that both boundary metric and mean curvature are prescribed. Using geometric arguments, we re-derive the necessary evolution equations contained in these references, as well as the required a priori estimate. We discuss how this construction is used in our proof of Proposition \ref{prop: main}. The monotonicity of the Brown-York mass is derived in Proposition \ref{prop: monontone} and generalizes the result in \cite[Lemma 4.2]{ShiTam02}. 

\label{BST-construction}  
Let $ \Sigma $ be a closed $(n-1)$-dimensional manifold
and let $t_0 > 0$. Let $\{g_{t}\}_{t \in [0, t_0]}$ 
be a smooth family of Riemannian metrics on $\Sigma$. 
Given any smooth positive function $ f $ on  $\Sigma \times [0, t_0]$, 
consider the Riemannian metric 
\begin{equation}
g_f := f^2 dt^2 + g_{t}. 
\end{equation}
Let $H_f$, $h_f$ denote the mean curvature and second fundamental form of $\Sigma \times \{ t \}$ with respect to the $g_f$ metric (where the mean curvature is the trace of the second fundamental form, and the signs are so that $H_f$ measures the rate of change of the area element in direction $\partial_t$).
If we let 1 denote the constant function that is identically one, then  $ f h_f = h_1 $ 
and $ f H_f = H_1 $.  

As in Proposition \ref{prop: main} we make the following standing

{\bf Assumption: }  The scalar curvature $ R (g_{t})=:2K $ of $ g_t $ and 
the mean curvature $H_1$ of the leaves $\Sigma \times \{t\}$ with respect to $ g_1 $ are everywhere positive.

\begin{proposition} [cf. \cite{Bartnik93}, \cite{SmithWeinstein04}, \cite{ShiTam02}] \label{prop-warped}
Under the above assumption, given any positive function $u_0$ on $ \Sigma \times \{0\}$, 
there is a smooth positive function $ u $ on $ \Sigma \times [0, t_0]$
such that  the scalar curvature $ R (g_u) $ of $ g_u $ is identically zero
and $ u|_{ t = 0 } = u_0 $.
\end{proposition}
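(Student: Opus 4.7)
The plan is to reduce the scalar-flatness condition $R(g_u) \equiv 0$ to a quasilinear parabolic Cauchy problem for $u$ on $\Sigma \times [0, t_0]$, and then to combine short-time existence for quasilinear parabolic PDEs with maximum-principle a priori bounds to produce a global solution.

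For a warped product $g_f = f^2 dt^2 + g_t$, the traced Gauss equation combined with the evolution equation for the mean curvature of the leaves $\Sigma \times \{t\}$ moving with normal speed $f$ gives
\begin{equation*}
R(g_f) = R(g_t) - H_f^2 - |h_f|^2 - \frac{2}{f}\,\partial_t H_f - \frac{2}{f}\,\Delta_{g_t} f.
\end{equation*}
Substituting $f = u$, using $H_f = H_1/u$ and $|h_f|^2 = |h_1|^2/u^2$, and imposing $R(g_u) = 0$ reduces, after multiplication by $u^3$, to
\begin{equation*}
H_1 \,\partial_t u = u^2 \Delta_{g_t} u + \tfrac{1}{2}\bigl(H_1^2 + |h_1|^2 + 2\,\partial_t H_1\bigr) u - K u^3.
\end{equation*}
Under the standing assumption $H_1 > 0$ and so long as $u > 0$, this is a uniformly quasilinear parabolic equation on the closed manifold $\Sigma$, with prescribed positive initial data $u_0$.

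Standard quasilinear parabolic theory then yields a smooth positive solution on some maximal interval $\Sigma \times [0, T^*)$. To rule out $T^* \leq t_0$, I would derive uniform two-sided positive bounds on $u$. At a spatial maximum where $\max u$ is increasing, one has $\Delta_{g_t} u \leq 0$ and $\partial_t u \geq 0$, so the equation forces $K u^2 \leq \tfrac{1}{2}(H_1^2 + |h_1|^2 + 2\,\partial_t H_1)$; since $K$ is bounded away from zero on the compact slab and the right side is uniformly bounded above, this yields $u \leq C$ uniformly. Inserting this upper bound into the equation at a spatial minimum (where $\Delta_{g_t} u \geq 0$) produces, via Hamilton's trick, a Lipschitz differential inequality $(\min u)'(t) \geq -C'\,\min u(t)$, and hence $\min u(t) \geq (\min u_0)\,e^{-C' t_0} > 0$ throughout $[0, t_0]$. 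With uniform positive two-sided bounds on $u$, uniform parabolicity persists and parabolic Schauder bootstrap gives all higher-order estimates up to $T^*$, so $T^* \geq t_0$.

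I expect the essential point to be the complementary roles of the two positivity hypotheses: $H_1 > 0$ keeps the equation strictly parabolic, while $K > 0$ supplies the cubic damping term $-K u^3$ that tames the maximum of $u$ and thereby saves the lower bound as well. Without either assumption the continuation argument can break down, but granted both, the construction reduces to the standard continuation scheme carried out by Bartnik \cite{Bartnik93} in the original quasi-spherical setting and by Shi-Tam \cite{ShiTam02} in the warped-product generalization.
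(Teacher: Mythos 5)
Your proposal is correct and follows essentially the same route as the paper: reduce $R(g_u)\equiv 0$ via the Gauss and mean-curvature evolution (Riccati) equations to the quasilinear parabolic equation $H_1\,\partial_t u = u^2\Delta_{g_t}u + \tfrac12\bigl(H_1^2+|h_1|^2+2H_1'\bigr)u - Ku^3$, get short-time existence from $u_0, H_1>0$, and continue to $[0,t_0]$ by two-sided positive $C^0$ bounds, with $K>0$ controlling the cubic term at a spatial maximum. The only cosmetic difference is the lower bound: you run Hamilton's trick on $\min u$ using the previously obtained upper bound, while the paper compares directly with the explicit exponential subsolution $\beta e^{-\gamma t}$ (which does not need the upper bound); both give the same exponential-in-time positive lower bound.
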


\begin{proof} 
By the Jacobi equation and the Gauss equation,  we have that 
\begin{eqnarray} \label{eqofdH}
\frac{d}{dt} H_u &=& \frac{d}{dt} \frac{H_1}{u} \nonumber \\ 
&=& - \Delta u - u (|h_u|^2 + Ric(g_u) (\nu_u, \nu_u)) \nonumber \\ 
&=& - \Delta u - \frac{u}{2} (R (g_u) - 2 K + H_u^2 + |h_u|^2) \nonumber \\ 
&=& - \Delta u + K u - \frac{1}{2u} (H_1^2 + |h_1|^2)
\end{eqnarray}

\noindent where $ | \cdot |^2 $ is taken with respect to $ g_{t}$,
$ Ric(g_u) $ denotes the Ricci curvature of $ g_u $, $ \nu_u = u^{-1} \partial_t $, 
and  $\Delta$ is taken with respect to $g_{t}$. Setting $ R ( g_u ) = 0 $,  we have that 
\begin{eqnarray} \label{eq: basic prescribed scalar curvature equation}
- u' + \frac{u^2}{H_1} \Delta u = u^3 \frac{K}{H_1} - \frac{u}{2 H_1} (H_1^2 + |h_1|^2 + 2 H_1')
\end{eqnarray}
where we abbreviated the $t$-derivative with a dash. 

The assumptions $u_0, H_1 > 0 $ guarantee that  \eqref{eq: basic prescribed scalar curvature equation} 
has a smooth positive solution  $ u $  with initial condition $ u|_{t=0} = u_0 $ on some small
interval $[0,  \delta)$, $ \delta > 0$. To show such a solution $ u $ can be extended to the whole interval $[0, t_0]$,
it suffices to prove that $ u $ remains bounded from above and from below by some positive constants
(depending only on $ t_0$) by standard parabolic theory.

To derive an upper bound for $ u$, let $ C > \max_\Sigma u_0  $ be a positive constant such that 
\begin{equation}
C^2 \geq \max_{\Sigma \times [0, t_0]} \frac{H_1 ^2 + |h_1|^2 + 2 H_1'}{2 K} .
\end{equation}
Suppose $ u \geq C $ somewhere on $ \Sigma \times [0, \delta)$. Since $ u (\sigma, 0) < C $,
there exists $( \tilde{\sigma}, \tilde{t}) \in \Sigma \times (0, \delta)$ such that
$ u( \tilde{\sigma}, \tilde{t}) = C $ and $ u(\sigma, t) \leq C $ for $ t \leq \tilde{t}$. 
At $(\tilde{\sigma}, \tilde{t})$,  by the assumptions $H_1 > 0$ and $ K > 0 $, we have
\begin{equation*}
 - u' + \frac{u^2}{H_1} \Delta u  \leq 0 , \ \ 
u^3 \frac{K}{H_1} - \frac{u}{2 H_1} (H_1^2 + |h_1|^2 + 2 H_1') > 0,
\end{equation*}
thus a contradiction to \eqref{eq: basic prescribed scalar curvature equation}. Hence, $ u < C $. 

To get a lower bound of $ u$, define $\underline u = \beta e^{- \gamma t}$ where
 $ \beta  <  \min_\Sigma u_0 $ is a positive constant and $ \gamma $  is another 
 positive constant such that
\begin{eqnarray} 
\gamma >  \max_{\Sigma \times [0, t_0]} \left( \beta^2 \frac{K}{ H_1} - \frac{H_1^2 + |h_1|^2 + 2 H_1'}{ 2 H_1}
\right) . 
\end{eqnarray} 
Let $ v = u - \underline u$, then $ v$ satisfies
\begin{eqnarray} \label{eqofv}
- v' + \frac{u^2}{H_1} \Delta v <  ( u^3 - \underline u^3) \frac{K}{H_1} - \frac{ v  }{2 H_1} (H_1^2 + |h_1|^2 + 2 H_1') .
\end{eqnarray}
Now suppose $ v \leq 0 $ somewhere on $ \Sigma \times [0, \delta)$. Since $ v |_{ t =  0} > 0 $,
there exists $( \tilde{\sigma}, \tilde{t}) \in \Sigma \times (0, \delta)$ such that
$ v( \tilde{\sigma}, \tilde{t}) = 0 $ and $ v \geq 0 $ for $ t \leq \tilde{t}$. 
At $(\tilde{\sigma}, \tilde{t})$,  by the assumptions $H_1 > 0$ and $ K > 0 $, we have
\begin{equation*}
 - v' + \frac{u^2}{H_1} \Delta v  \geq 0 , \ \ 
(u^3 - \underline u^3) \frac{K}{H_1} - \frac{ v }{2 H_1} (H_1^2 + |h_1|^2 + 2 H_1')  =  0,
\end{equation*}
thus a contradiction to \eqref{eqofv}. Hence $ v  > 0 $, i.e. $ u(\sigma, t) > \beta e^{- \gamma t}$ for all $(\sigma, t) \in \Sigma \times [0, t_0)$. 
\end{proof}

The monotonicity formula in the following proposition generalizes \cite[Lemma 4.2]{ShiTam02}. 
 
\begin{proposition} \label{prop: monontone}
Suppose $ u $ and $ \eta $ are two smooth positive functions on $ \Sigma \times [0, t_0]$
such that $ R( g_u) = 0 $ and $ Ric(g_\eta) = 0 $. 
Then
$$ \int_{\Sigma \times \{t\}} (H_\eta - H_u ) \ d \sigma_t $$
is monotone non-increasing in $ t $. Here $ d \sigma_t $ denotes the volume form
of $ g_{t} $ on $ \Sigma \times \{t\}$. 
\end{proposition}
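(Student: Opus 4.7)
The plan is to differentiate the integral $m(t) := \int_{\Sigma \times \{t\}} (H_\eta - H_u)\, d\sigma_t$ directly, using the evolution formula \eqref{eqofdH} derived in the proof of Proposition \ref{prop-warped}, and to show that the integrand after differentiation is pointwise nonpositive. The useful starting point is that both $u$ and $\eta$ rescale the mean curvature in the same way: $H_u = H_1/u$ and $H_\eta = H_1/\eta$, so $H_\eta - H_u = H_1(u-\eta)/(u\eta)$, and the area element of $\Sigma \times \{t\}$ evolves by $\tfrac{d}{dt} d\sigma_t = H_1\, d\sigma_t$.

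First I would apply \eqref{eqofdH} twice. For $u$, scalar-flatness $R(g_u)=0$ gives
\[
\tfrac{d}{dt} H_u = -\Delta u + K u - \tfrac{1}{2u}(H_1^2 + |h_1|^2).
\]
For $\eta$, Ricci-flatness gives two pieces of information. First, $R(g_\eta)=0$, so the same formula yields
\[
\tfrac{d}{dt} H_\eta = -\Delta \eta + K \eta - \tfrac{1}{2\eta}(H_1^2 + |h_1|^2).
\]
Second, since additionally $\Ric(g_\eta)(\nu_\eta,\nu_\eta) = 0$, the Gauss equation applied to the hypersurface $\Sigma \times \{t\} \subset (\Sigma \times [0,t_0], g_\eta)$ reads
\[
2K = R(g_t) = H_\eta^2 - |h_\eta|^2, \qquad \text{equivalently} \qquad 2K\eta^2 = H_1^2 - |h_1|^2.
\]
Note here that the standing assumption $K>0$ is precisely $|h_1|^2 < H_1^2$; this is the inequality that will produce the sign at the end.

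Next I would assemble
\[
\tfrac{d}{dt} m(t) = \int_{\Sigma\times\{t\}} \Bigl[ \tfrac{d}{dt}(H_\eta - H_u) + (H_\eta - H_u) H_1 \Bigr]\, d\sigma_t.
\]
The Laplacian contribution $-\Delta(\eta - u)$ integrates to zero on the closed manifold $\Sigma$. Substituting $Ku = u(H_1^2 - |h_1|^2)/(2\eta^2)$ from the Gauss identity above, and using $(H_\eta - H_u) H_1 = H_1^2(u-\eta)/(u\eta)$, the integrand becomes a rational function of $u,\eta$ linear in $H_1^2$ and $|h_1|^2$. A short bookkeeping computation, collecting the coefficients of $H_1^2$ and $|h_1|^2$ separately over the common denominator $2u\eta^2$, shows that both coefficients reduce to $\pm (u-\eta)^2/(2u\eta^2)$, with opposite signs. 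The result is the clean identity
\[
\tfrac{d}{dt} m(t) = \int_{\Sigma \times \{t\}} \frac{(u-\eta)^2}{2u\eta^2}\bigl(|h_1|^2 - H_1^2\bigr)\, d\sigma_t.
\]
Since $|h_1|^2 < H_1^2$ by the standing positivity of $K$, this integral is nonpositive, which gives the monotonicity.

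The only step that is genuinely more than bookkeeping is the telescoping of the $H_1^2$ and $|h_1|^2$ coefficients into perfect squares $(u-\eta)^2$, and the main conceptual input is the use of Ricci-flatness (not merely scalar-flatness) of $g_\eta$ to replace $K\eta$ by $(H_1^2 - |h_1|^2)/(2\eta)$; without this the $Ku$ term cannot be eliminated and the clean pointwise inequality does not emerge.
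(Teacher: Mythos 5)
Your proof is correct and follows essentially the same route as the paper: differentiate $m(t)$ using \eqref{eqofdH} together with $\tfrac{d}{dt}d\sigma_t = H_1\,d\sigma_t$, drop the Laplacian by integration by parts, and use the Gauss identity $2K\eta^2 = H_1^2 - |h_1|^2$ coming from $\Ric(g_\eta)=0$ to telescope the integrand into a perfect square. The only cosmetic difference is that the paper substitutes $H_1^2 - |h_1|^2 = 2K\eta^2$ and writes the derivative as $-\int K(\eta-u)^2 u^{-1}\,d\sigma_t$, whereas you eliminate $K$ and write it as $\int \tfrac{(u-\eta)^2}{2u\eta^2}(|h_1|^2-H_1^2)\,d\sigma_t$; these are identical expressions, and your observation that $K>0 \Leftrightarrow |h_1|^2 < H_1^2$ correctly closes the sign argument.
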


\begin{proof}
By \eqref{eqofdH}, we have
\begin{equation} \label{dtoftotalH}
\begin{split}
& \ \frac{d}{dt} \left( \int_{\Sigma \times \{t\}} (H_\eta - H_u ) \ d \sigma_t  \right) \\
 = & \ \int_{\Sigma \times \{t\}} (\eta^{-1} - u^{-1} ) H_1^2 +  K( \eta - u) - \frac{1}{2}(\eta^{-1} - u^{-1} )(H_1^2 + |h_1|^2)
  \ d \sigma_t .  
  \end{split}
\end{equation}
By the Gauss equation and the assumption that  $ Ric(g_\eta) = 0 $, we have
\begin{equation} \label{gaussineta}
2 K =  H_\eta^2 - | h_\eta|^2 = \eta^{-2} ( H_1^2 - |h_1|^2). 
\end{equation}
Therefore, it follows from \eqref{dtoftotalH} and \eqref{gaussineta} that 
\begin{equation}
\ \frac{d}{dt} \left( \int_{\Sigma \times \{t\}} (H_\eta - H_u ) \ d \sigma_t  \right) 
=  - \int_{\Sigma \times \{t\}} K ( \eta - u )^2 u^{-1} \ d \sigma_t \leq 0,
\end{equation}
where we also used the assumption that $ K > 0 $.  
\end{proof}

\section{Proof of Proposition \ref{prop: main} using the spin assumption for the equality case} \label{pfofthm}

Let $(\Omega, g)$ be as in Proposition \ref{prop: main}. Let $\{\Sigma_1, \ldots, \Sigma_p\}$ be the boundary components of $\partial \Omega$, and fix one $\Sigma_i$. The pull-back of the Euclidean metric by $F$ to $\Sigma_i \times [0, 1]$ has the form $g_{\eta_i} = \eta_i^2 dt^2 + g_t$ where $g_{t}$ is a time dependent metric on $\Sigma_i$. Using Proposition \ref{prop-warped} we can find a smooth positive function $u_i: \Sigma_i \times [0, 1] \to \R$ so that the scalar curvature of the metric $g_{u_i} := u_i^2 dt^2 + g_t$ vanishes and so that $u_i(\sigma, 0) = \eta(\sigma, 0) \frac{\iota^* \hat H}{H}$. Here,  $\hat H$ is the Euclidean mean curvature of $\hat \Sigma_i = \iota (\Sigma_i) \subset \R^n$ and $H$ is the mean curvature of $\Sigma_i$ in $(\Omega, g)$. This means that the mean curvature of $\Sigma_i \simeq \Sigma_i \times \{0\}$ equals $H$ in the $g_{u_i}$ metric. By Proposition \ref{prop: monontone}, we have that
\begin{eqnarray} \label{eq-monotone1}
\int_{\hat \Sigma_i} \hat H - \int_{\Sigma_i } H  &=& \int_{\hat \Sigma_i} \hat H - \int_{\Sigma_i \times \{0\}} H_{u_i} \\ \nonumber &=& \int_{{\Sigma_i \times \{0\}}} (H_\eta - H_{u_i} )  \geq
\int_{\Sigma_i \times \{1\}} (H_\eta - H_{u_i}) 
\end{eqnarray}
where as before, $H_\eta$ is the mean curvature of $\Sigma_i \times \{1\}$ in the $g_\eta$ metric (or, equivalently, the Euclidean mean curvature of $F(\Sigma_i, 1)$), and $H_{u_i}$ the mean curvature of $\Sigma_i \times \{1\}$ with respect to the metric $g_{u_i}$. 
For convenience, we omit writing the volume forms.

Next, let $N_i$ be the exterior region of the strictly convex hypersurface $F(\Sigma_i, 1)$ in $ \R^n $. By the result of Shi and Tam \cite{ShiTam02} (cf. Theorem \ref{thm: ShiTam}) there exists an asymptotically flat metric $g_i$ on $N_i$ 
such that $g_i $ has vanishing scalar curvature and such that along the hypersurface $F(\Sigma_i, 1)$ the metric $g_i$ coincides with the Euclidean metric, and its mean curvature with respect to $g_i$ equals the mean curvature of $\Sigma_i \times \{1\}$ with respect to $g_{u_i}$ via the identification $F(\cdot, 1)$. Furthermore, 
\begin{equation} \label{eq-monotone2}
\int_{\Sigma_i \times \{1\}}  (H_{\eta} - H_{u_i} ) = \int_{F(\Sigma_i, 1)} \hat H - \int_{\Sigma_i \times \{1\}} H_{u_i} 
\geq c(n) \mathfrak{m} ( {g}_i ), 
\end{equation}
where $ c(n) $ is a positive constant depending only on $ n $ and $ \mathfrak{m} ( {g}_i)$ is the ADM mass of $ {g}_i $ \cite{ADM61}. 

Now we have $1+2p$ Riemannian manifolds $(\Omega, g)$, $(\Sigma_i \times [0, 1], g_{u_i})$, $(N_i, g_i)$ with non-negative scalar curvature whose boundaries are identified via $\iota$ and $F(\cdot, 1)$ respectively such that the `inner and outer mean curvatures' match along these identifications. Glue these manifolds together to obtain an asymptotically flat manifold $(M, \mathcal{G})$ as in \cite{ShiTam02}. 
If $ \Omega $ is spin, then the positive mass theorem for spin manifolds with Lipschitz singularities in \cite[Section 3]{ShiTam02} can be applied to $ (M, \mathcal{G}) $ and shows that $\mathfrak{m}(g_i) \geq 0$, which together with (\ref{eq-monotone1}), (\ref{eq-monotone2}) gives inequality (\ref{eqn: main theorem}). Moreover, if equality in (\ref{eqn: main theorem}) holds, then the rigidity statement of the positive mass theorem for spin manifolds with Lipschitz singularities in \cite[Section 3]{ShiTam02} implies that $(\Omega, g)$ is isometric to a subset of $(\R^n, g)$. We note that if $3 \leq n\leq 7$ but $ \Omega $ is not assumed to be spin, then $\mathfrak{m}(g_i) \geq 0$ follows from the positive mass theorem for manifolds with corners along hypersurfaces in \cite{Miao02}. Hence inequality (\ref{eqn: main theorem}) remains valid provided $ 3 \le n \le 7$ even when $\Omega$ is not spin.

\section{Equality case in Proposition \ref{prop: main} without the spin assumption} \label{sec: rigidity}

In this section, we examine the equality case in Proposition \ref{prop: main} when $ \Omega $ is not assumed to be spin 
 but the dimension $ n $ satisfies $ 3 \le n \le 7$. We will use several ideas from \cite{SchoenYau79} related to the change of mass under conformal changes of the metric. The following lemma is standard and well-known to experts in geometric measure theory: 

\begin{lemma} \label{lem: GMT lemma} Let $(\Omega, g)$ be a compact Riemannian manifold of dimension $3 \leq n \leq 7$ with boundary components $\{\Sigma_1, \ldots, \Sigma_p\}$ where $p \geq 2$. Assume that the boundary is mean convex with respect to the outward pointing unit normal. Then for every $i \in \{1, \ldots p\}$ there exists a subdomain $\Omega_i \subset \Omega$ so that $\partial \Omega_i$ has two components: $\Sigma_i$ and a smooth embedded minimal surface. Put differently, every boundary component is homologous to a minimal surface in $(\Omega, g)$. 
\end{lemma}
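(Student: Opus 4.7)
The plan is to produce $\Omega_i$ as a perimeter minimizer in a suitable class of Caccioppoli sets, combining the direct method, the classical interior regularity theory for area-minimizing boundaries, and a maximum principle for mean-convex barriers. Fix $i$, and consider the class
\[
\mathcal{A}_i = \bigl\{ E \subset \Omega \text{ of finite perimeter} : \operatorname{tr}_{\Sigma_i} \chi_E = 1, \ \operatorname{tr}_{\Sigma_j} \chi_E = 0 \text{ for each } j \neq i \bigr\},
\]
where $\operatorname{tr}_{\Sigma_j}$ denotes the BV trace on the boundary component $\Sigma_j \subset \partial \Omega$. The class is nonempty since it contains any sufficiently thin inward tubular neighborhood of $\Sigma_i$. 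I would then minimize the interior perimeter
\[
\alpha_i = \inf_{E \in \mathcal{A}_i} \mathcal{H}^{n-1}\bigl(\partial^* E \cap \interior \Omega\bigr).
\]
Any competitor $E$ can be modified near $\partial\Omega$ by filling in a fixed thin collar of $\Sigma_i$ and emptying fixed thin collars of each $\Sigma_j$ for $j \neq i$ without increasing the interior perimeter; this yields uniform perimeter control near $\partial \Omega$, so that standard BV compactness and lower semicontinuity of the perimeter produce a minimizer $\Omega_i \in \mathcal{A}_i$ attaining $\alpha_i$.

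By the interior regularity theorem for area-minimizing Caccioppoli sets (De Giorgi, Fleming, Federer, Simons), the assumption $3 \leq n \leq 7$ ensures that $\Sigma' := \partial^* \Omega_i \cap \interior \Omega$ is a smooth embedded minimal hypersurface. Moreover, since $\Omega$ is connected while the trace of $\chi_{\Omega_i}$ is $1$ on $\Sigma_i$ and $0$ on every $\Sigma_j$ with $j \neq i$, the characteristic function $\chi_{\Omega_i}$ must have a nontrivial jump set in the interior, so $\Sigma'$ is nonempty.

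The main obstacle is to show that $\Sigma'$ does not tangentially touch $\partial \Omega$ anywhere. This is where the mean convexity of $\partial \Omega$ is essential. If $\Sigma'$ were tangent to some $\Sigma_j$ at a point of one-sided contact, the strong maximum principle for minimal hypersurfaces against a $C^2$ mean-convex barrier would force $\Sigma' = \Sigma_j$ locally, contradicting the trace condition defining $\mathcal{A}_i$ (namely $\operatorname{tr}_{\Sigma_j} \chi_{\Omega_i} = 0$ when $j \neq i$, or the fact that $\Sigma_i \subset \partial \Omega$ cannot coincide with the interior free boundary component when $j=i$). In the degenerate case where the mean convexity is only weak (some $H_j$ vanishes somewhere), one first applies the argument on a slightly shrunk subdomain $\Omega^\varepsilon \subset \Omega$ whose inward-parallel boundary is strictly mean convex, and then takes $\varepsilon \to 0$. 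With this separation established, the minimizer $\Omega_i$ is a subdomain of $\Omega$ whose boundary consists precisely of $\Sigma_i$ together with the smooth embedded minimal hypersurface $\Sigma'$, as required.
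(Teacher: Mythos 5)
The paper itself gives no proof of this lemma (it is cited as ``standard''), and your outline is indeed the standard route: minimize perimeter in a constrained class of Caccioppoli sets, use interior regularity for $3\le n\le 7$, and use mean convexity to keep the minimizing boundary away from $\partial\Omega$. However, the step you treat as routine is exactly the one that needs an argument. The class $\mathcal{A}_i$ is \emph{not} closed under $L^1$ convergence, since BV traces are not continuous under weak-$*$ convergence (collars of $\Sigma_i$ of thickness $1/k$ lie in $\mathcal{A}_i$ but converge to the empty set), so BV compactness plus lower semicontinuity alone does not produce a minimizer in $\mathcal{A}_i$; and the repair you propose --- that filling/emptying fixed thin collars never increases the interior perimeter --- is false for a general domain and for a general competitor. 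For instance, if $E$ contains an annular shell $\{ s/2 < d_{\Sigma_j} < 2s \}$, deleting $\{ d_{\Sigma_j} < s\}$ creates the new interface $E^{(1)}\cap\{d_{\Sigma_j}=s\}$, whose area can exceed that of the deleted portion of $\partial^* E$ when the inward parallel hypersurfaces expand. This is precisely where mean convexity must enter, via an actual computation: for small $\epsilon$ the level sets of $d_{\Sigma_j}$ are strictly mean convex, and applying the divergence theorem to $\nabla d_{\Sigma_j}$ on $E\cap\{d_{\Sigma_j}<s\}$ (using $\operatorname{tr}_{\Sigma_j}\chi_E=0$ and the sign of $\Delta d_{\Sigma_j}$) gives $\mathcal{H}^{n-1}(E^{(1)}\cap\{d_{\Sigma_j}=s\})\le P(E;\{d_{\Sigma_j}<s\})$, strictly if $|E\cap\{d_{\Sigma_j}<s\}|>0$, and similarly for filling a collar of $\Sigma_i$. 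Only with this (or with the equivalent reformulation in which exterior collars are attached and competitors are required to contain the collar of $\Sigma_i$ and avoid those of $\Sigma_j$, $j\ne i$ --- inclusion constraints that do pass to the $L^1$ limit) do you obtain an anchored minimizing sequence and a minimizer with the correct traces; moreover the same inequality, applied to the minimizer itself, shows $\Omega_i$ contains a full collar of $\Sigma_i$ and avoids full collars of every $\Sigma_j$, $j\neq i$.

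This computation also repairs your barrier step, which as written has a regularity gap: at a putative touching point of $\partial\Omega_i$ with $\partial\Omega$ you cannot directly invoke the classical strong maximum principle, because the interior regularity theorem makes $\partial^*\Omega_i$ smooth only in $\interior\Omega$, so near such a point the surface is not a priori a $C^2$ graph to compare with the barrier; one must either use the strong maximum principle for minimizing boundaries/stationary varifolds (Solomon--White type), or simply note that the collar estimate above already forces $\partial\Omega_i\cap\interior\Omega$ to stay a positive distance from $\partial\Omega$, so that only interior regularity is ever used. Finally, your $\varepsilon$-shrinking remark for merely weak mean convexity is not sound as stated: by the Riccati equation, the inward parallel hypersurfaces of a boundary with $H\ge 0$ need not be mean convex when $\Ric(\nu,\nu)<0$, so the ``slightly shrunk'' domain need not have a strictly mean convex boundary (in the paper's application the boundary has $H>0$, so this degenerate case is not needed). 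With the collar/divergence argument supplied, the remainder of your outline --- nonemptiness of the interior jump set from connectedness and the trace values, and smoothness of the minimal hypersurface from the $n\le 7$ regularity theory --- is correct.
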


We divide the proof of the equality case in Proposition \ref{prop: main} into a sequence of lemmas. 

\begin{lemma} \label{lem: ShiTam with minimal boundaries} 
Suppose  $3 \leq n \leq 7$. The inequality (\ref{eqn: main theorem}) in Proposition \ref{prop: main} remains true if $(\Omega, g)$ has additional boundary components, whose union we denote by $\Sigma_0,$ provided  $\Sigma_0$ is a minimal surface. 
\end{lemma}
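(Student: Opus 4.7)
My plan is to reduce to the setting of Proposition \ref{prop: main} without the minimal boundary by a doubling argument across $\Sigma_0$, after which the positive mass theorem for non-spin asymptotically flat manifolds with Lipschitz corners in dimensions $3 \leq n \leq 7$ can be applied. First, I would mimic the construction of Section \ref{pfofthm} for each non-minimal boundary component $\Sigma_i$ with $i = 1, \ldots, p$: extend $(\Omega, g)$ across $\Sigma_i$ by the warped-product collar $g_{u_i} = u_i^2\, dt^2 + g_t$ produced by Proposition \ref{prop-warped} with initial value $u_i(\cdot,0) = \eta_i(\cdot, 0)\, \iota^* \hat H / H$, and then append the Shi-Tam asymptotically flat scalar-flat extension $(N_i, g_i)$ exterior to $F(\Sigma_i, 1)$. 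The monotonicity inequalities \eqref{eq-monotone1} and \eqref{eq-monotone2} continue to hold in this setting, so it suffices to prove $\mathfrak{m}(g_i) \geq 0$. The glued manifold $(M, \mathcal{G})$ is then asymptotically flat with $p$ ends, has non-negative scalar curvature with matching mean curvatures across the Lipschitz hypersurfaces $\Sigma_i$ and $F(\Sigma_i, 1)$ in the sense of \cite{Miao02}, and carries $\Sigma_0$ as its only remaining boundary, still minimal.

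Next, I would double $(M, \mathcal{G})$ across $\Sigma_0$ to obtain a complete manifold $(\tilde M, \tilde{\mathcal{G}})$ with $2p$ asymptotically flat ends. Since $\Sigma_0$ is minimal from both sides, the two mean curvatures along $\Sigma_0$ vanish and hence match, so the corner condition of \cite{Miao02} is satisfied and $\tilde{\mathcal{G}}$ has non-negative distributional scalar curvature across $\Sigma_0$ in addition to its preexisting corners, which are simply duplicated. I would then apply the conformal smoothing procedure of \cite{Miao02} simultaneously in disjoint tubular neighborhoods of all corner hypersurfaces of $\tilde M$, producing for every $\epsilon > 0$ a smooth asymptotically flat metric $\tilde{\mathcal{G}}_\epsilon$ with pointwise non-negative scalar curvature whose ADM mass in each end is within $\epsilon$ of the original. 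The classical positive mass theorem of Schoen and Yau \cite{SchoenYau79} (extended through dimension $n = 7$ by Schoen \cite{Schoen87}), applied end by end to $(\tilde M, \tilde{\mathcal{G}}_\epsilon)$, then yields $\mathfrak{m}(\tilde g_i) \geq -\epsilon$. Letting $\epsilon \to 0$ gives $\mathfrak{m}(g_i) \geq 0$, which combined with \eqref{eq-monotone1} and \eqref{eq-monotone2} proves \eqref{eqn: main theorem}.

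The point I expect to need the most care is the interaction between the doubling step and the mollification of \cite{Miao02}: I must verify that the conformal smoothing can be localized so that distinct corner hypersurfaces (in particular the new corner along $\Sigma_0$ and the preexisting ones along $\Sigma_i$ and $F(\Sigma_i, 1)$) are treated independently, and that the asymptotic estimates leading to the ADM mass control in each individual end are preserved. Handling several non-interacting Lipschitz corners is essentially a repetition of the single-corner analysis, but checking this uniformity carefully, as well as confirming that the doubled metric is genuinely of class $C^{0,1}$ so that \cite{Miao02} applies verbatim, constitutes the bulk of the technical work.
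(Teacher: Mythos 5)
Your proposal is correct and follows essentially the same route as the paper: extend across each $\Sigma_i$ as in Section \ref{pfofthm}, double $(M, \mathcal{G})$ across the minimal surface $\Sigma_0$, then invoke the Miao mollification machinery and the positive mass theorem end by end. One small technical clarification relative to your description: Miao's procedure is not a purely local conformal smoothing near the corners; it first replaces $\mathcal{G}$ by a mollified metric $g_\delta$ (local, near the corner hypersurfaces) whose scalar curvature is uniformly bounded below, and then solves a \emph{global} linear elliptic equation $-\Delta_{g_\delta} u_\delta + \frac{n-2}{4(n-1)} R(g_\delta)_- u_\delta = 0$ with $u_\delta \to 1$ in every end, so that $\mathcal{G}_\delta = u_\delta^{4/(n-2)} g_\delta$ is scalar non-negative with ADM masses converging to those of $\mathcal{G}$; this two-step structure is what makes the argument go through with several corners and several ends at once, and it addresses precisely the ``independence of corners'' worry you flag.
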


\begin{proof}
We proceed as in the proof of inequality \eqref{eqn: main theorem} in the case where $\Sigma_0$ is empty by constructing an asymptotically flat manifold $(M, \mathcal{G})$ which contains $(\Omega, g)$ isometrically and has non-negative scalar curvature (distributionally across the boundary components $\Sigma_i$  for $i \in \{1, \ldots, p\}$). We double $(M, \mathcal{G})$ across the minimal surface $\Sigma_0$ and verify that the positive mass theorem applies in this situation, cf. \cite{Bray01}. For ease of notation, we continue to denote the doubled manifold by $(M, \mathcal{G})$. By \cite[Proposition 3.1]{Miao02}, there exists a sequence of smooth metrics $g_\delta$ such that $ g_\delta$ agrees with $\mathcal{G}$ outside a $\delta$-neighborhood of the union of $\bigcup_{i=1}^p \Sigma_i$, 
$ \bigcup_{i=1}^p F(\Sigma_i, 1)$,
their reflections, and $\Sigma_0$ such that $g_\delta \to \mathcal{G}$ in $\mathcal{C}^{0}$ as $ \delta \rightarrow 0$ and such that the scalar curvature $R(g_\delta)$ is bounded below by a constant that is independent of $\delta>0$. 
Let $ R(g_\delta)_{-} $ denote the negative part of $ R( g_\delta)$. As in \cite[Section 4.1]{Miao02}, 
we let $u_\delta : M \to \mathbb{R}$ be the (unique, positive) solution of
$$- \Delta_{g_\delta} u_\delta + \frac{n-2}{4(n-1)} R(g_\delta)_{-} u_\delta = 0,$$ 
which tends to $1$ in all of the $2p$ asymptotically flat ends.
Let $\mathcal{G}_\delta := u_\delta^{\frac{4}{n-2}} g_\delta$.
By \cite[Lemma 4.2]{Miao02}, $ \lim_{\delta \rightarrow 0+}
\mathfrak{m} ( \mathcal{ G}_\delta, N_i) = \mathfrak{m} ( \mathcal{G}, N_i ) $ 
for each end $N_i$ of $ M$. 
Since $ \mathcal{G}_\delta $ has non-negative scalar curvature, 
the positive mass theorem implies $ \mathfrak{m} ( \mathcal{ G}, N_i) \ge 0$. Therefore, $  \mathfrak{m} ( \mathcal{G} ) \ge 0 $, and the lemma follows as in the proof of Proposition \ref{prop: main} in Section \ref{pfofthm}.  
\end{proof}

\begin{lemma} \label{lma: R-zero-Omega_i}
Suppose $ 3 \le n \le 7$ and that the equality (\ref{eqn: main theorem}) in Proposition \ref{prop: main}  holds for some boundary component $\Sigma_i$, $i \in \{1, \ldots, p\}$. Then
$H_{\Sigma_i} \equiv \iota^*(\hat H_{\hat \Sigma_i})$. 
Moreover, if $ p \ge  2 $ and if  $ \Sigma_0 $ is the minimal surface  provided by Lemma \ref{lem: GMT lemma} such that  $ \Sigma_0 $ and $ \Sigma_i $ bound a subdomain
 $ \Omega_i $ of $\Omega$, then $R(g) \equiv 0$ on $ \Omega_i$. 
\begin{proof}
Let $ N_i $ be the asymptotically flat end corresponding to 
$ \Sigma_i $ that is constructed in  Section \ref{pfofthm}.
Then by assumption $ 0 \le \mathfrak{m} (N_i, \mathcal{G}) \leq \int_{\Sigma_i} (\hat H - H) = 0$.
By Proposition \ref{prop: monontone}, we  conclude that  
$ u_i = \eta_i $, where $ u_i $ and $ \eta_i $ are the functions in
the proof in Section \ref{pfofthm}. In particular, this implies 
$H_{\Sigma_i} \equiv \iota^*(\hat H_{\hat \Sigma_i})$.

Next suppose $ p \ge 2 $ and let $ \Sigma_0 $, $ \Omega_i$ be as in the statement of the lemma. Suppose $R(g)$ is strictly positive somewhere in $\Omega_i$. Let $u: \Omega_i \to \mathbb{R}$ be the solution of the Dirichlet problem $- \Delta_g u + \frac{n-2}{4(n-1)} R(g) u = 0$ such that $u = 1$ on $\partial \Omega_i$. Note that $u$ cannot be constant. The strong maximum principle gives that $0 < u \leq 1$, with equality only on the boundary, and further that $\nu(u)>0$ where $\nu$ is the outward pointing unit normal. It follows that in the conformal metric $\bar g := u^{\frac{4}{n-2}}g $ the boundary components $\Sigma_0$, $\Sigma_i$ have strictly greater mean curvature than with respect to the $g$ metric, so that in particular $\Sigma_0$ becomes mean convex in $(\Omega, \bar{g})$.
Now  let $ \bar{N}_i $ be the asymptotically flat end constructed in  Section \ref{pfofthm} corresponding to the boundary component 
$ \Sigma_i $ in $(\Omega_i, \bar{g})$. Let $(M, \bar{\mathcal{G}})$ be the asymptotically flat manifold obtained by gluing $(\Omega, \bar{g})$ and
$ \bar{N}_i$ along $ \Sigma_i$. 
Let $ \bar{H} $ be the mean curvature of $ \Sigma_i $ in $(\Omega_i, \bar{g})$.   By the proof in Section \ref{pfofthm} and the fact that
$ \bar{H} > H $,  we then have 
\begin{equation} \label{eq-A}
\mathfrak{m}( \bar {\mathcal{G}}, N_i)  \leq \int_{\Sigma_i} (\hat H - \bar H)  < 0.
\end{equation}
\noindent On the other hand, we can double the manifold $(M, \bar{\mathcal{G}})$ across its mean convex boundary $\Sigma_0$ just as in the proof of Lemma \ref{lem: ShiTam with minimal boundaries} to conclude that $\mathfrak{m}( \bar {\mathcal{G}}, N_i) \ge 0$. This is a contradiction to \eqref{eq-A}. Therefore, $ R (g)$ must be identically
zero in $ \Omega$, as asserted.
\end{proof}
\end{lemma}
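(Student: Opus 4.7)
The plan is to extract two consequences from the assumed equality in \eqref{eqn: main theorem}. For the first conclusion, I would chase through the inequalities used in Section \ref{pfofthm}: combining the monotonicity of Proposition \ref{prop: monontone} with the boundary estimate \eqref{eq-monotone2} and the non-negativity of the ADM mass $\mathfrak{m}(N_i, \mathcal{G})$ (valid in dimensions $3 \le n \le 7$ via the corners positive mass theorem of \cite{Miao02}) yields
\[
0 \le c(n)\, \mathfrak{m}(N_i, \mathcal{G}) \le \int_{\Sigma_i} (\hat H - H)\, d\sigma = 0.
\]
Every link in this chain must therefore be an equality. The error term of Proposition \ref{prop: monontone} is $\int K (\eta_i - u_i)^2 u_i^{-1}\, d\sigma_t$, which together with the standing assumption $K > 0$ forces $u_i \equiv \eta_i$ on $\Sigma_i \times [0, 1]$. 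Evaluating at $t = 0$ and inserting the prescribed initial value $u_i(\cdot, 0) = \eta_i(\cdot, 0)\, \iota^* \hat H / H$ gives $H \equiv \iota^* \hat H$ on $\Sigma_i$, which is the first assertion.

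For the second assertion, I would argue by contradiction: assume $p \ge 2$, let $\Sigma_0$ and $\Omega_i$ be as in the statement, and suppose $R(g) > 0$ somewhere on $\Omega_i$. The strategy is to conformally perturb $g$ on $\Omega_i$ so that the result is scalar flat and has strictly larger mean curvature on both boundary components. Let $u$ solve the Dirichlet problem
\[
- \Delta_g u + \frac{n-2}{4(n-1)} R(g)\, u = 0 \quad \text{on } \Omega_i, \qquad u \equiv 1 \text{ on } \partial \Omega_i.
\]
Since $R(g) \ge 0$ and is positive somewhere, $u$ is non-constant, and the strong maximum principle together with the Hopf lemma give $0 < u < 1$ in the interior and $\nu(u) > 0$ along $\partial \Omega_i$, where $\nu$ is the outward unit normal. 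The metric $\bar g := u^{4/(n-2)} g$ is scalar flat, induces the same metric on $\partial \Omega_i$ (because $u \equiv 1$ there), and the conformal transformation formula
\[
\bar H = H + \frac{2(n-1)}{n-2}\, \nu(u) \quad \text{on } \partial \Omega_i
\]
shows that both $\Sigma_0$ and $\Sigma_i$ acquire strictly larger mean curvature in $(\Omega_i, \bar g)$. In particular, $\Sigma_0$ becomes strictly mean convex, and on $\Sigma_i$ we have $\bar H > H = \iota^* \hat H$.

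Rerunning the extension construction of Section \ref{pfofthm} on $(\Omega_i, \bar g)$ then produces an asymptotically flat manifold $(M, \bar{\mathcal{G}})$ with strictly mean convex inner boundary $\Sigma_0$ and a single outer end $\bar N_i$. The monotonicity argument applied to the new data yields
\[
c(n)\, \mathfrak{m}(\bar{\mathcal{G}}, \bar N_i) \le \int_{\Sigma_i} (\hat H - \bar H)\, d\sigma < 0.
\]
On the other hand, because $\Sigma_0$ is now strictly mean convex, I can double $(M, \bar{\mathcal{G}})$ across $\Sigma_0$ as in the proof of Lemma \ref{lem: ShiTam with minimal boundaries} and apply the positive mass theorem to the doubled manifold to conclude $\mathfrak{m}(\bar{\mathcal{G}}, \bar N_i) \ge 0$, a contradiction.

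The main technical obstacle is precisely the doubling in the final step: because $\Sigma_0$ is only mean convex rather than minimal, the doubled metric carries a genuine mean-curvature jump across $\Sigma_0$ in addition to the Lipschitz corners along $\Sigma_i$ and $F(\Sigma_i, 1)$. This is resolved by noting that the sum of the mean curvatures from the two sides of $\Sigma_0$ equals $2\bar H > 0$, so the smoothing procedure of \cite[Proposition 3.1]{Miao02} together with the conformal correction of \cite[Lemma 4.2]{Miao02} applies verbatim as in the proof of Lemma \ref{lem: ShiTam with minimal boundaries}, reducing the question to the classical positive mass theorem available for $3 \le n \le 7$.
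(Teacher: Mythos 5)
Your proposal is correct and follows essentially the same route as the paper: equality in \eqref{eqn: main theorem} forces $u_i \equiv \eta_i$ through the monotonicity error term, giving $H \equiv \iota^*\hat H$ at $t=0$; and the conformal deformation by the solution of the Lichnerowicz-type Dirichlet problem strictly increases the mean curvature of both boundary components, producing a negative mass contradiction after doubling across the now strictly mean-convex $\Sigma_0$. The only addition you make is to spell out why the mean-curvature jump along the doubled $\Sigma_0$ has the favorable sign ($2\bar H > 0$), which the paper leaves implicit in its reference back to Lemma \ref{lem: ShiTam with minimal boundaries}; this is a useful clarification and the argument you give for it is sound.
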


Our next lemma is a minor modification of \cite[Corollary 2.1]{MiaoShiTam09}:

\begin{lemma} \label{lem: extension MiaoShiTam}
Let $(\Omega, g)$ be a smooth compact connected manifold with disconnected boundary $\partial \Omega = \Sigma_0 \dot \cup \Sigma$ such that $\Sigma$ is mean convex, $\Sigma_0$ is a minimal surface, and such that the scalar curvature $R(g) \equiv 0$. 

Suppose that $g$ is such that 
$\int_{\Sigma} \bar H$
is largest amongst all nearby (in $\mathcal{C}^2$) metrics $\bar g$ that are scalar flat, so that $\bar {g}|_{T \Sigma} = g|_{T \Sigma}$, and such that $\Sigma_0$ and $\Sigma$ are respectively minimal and mean convex (with mean curvature $\bar H$) in $(\Omega, \bar g)$. Then $Ric(g) \equiv 0$. 
\begin{proof}
Let $h$ be a smooth symmetric $(0, 2)$-tensor compactly supported in the interior of $\Omega$, let $g_t = g + t h$ for small $t$, 
and let $u_t$ be the unique positive solution of 
\begin{eqnarray*}
- \Delta_{g_t} u_t +  \frac{n-2}{4(n-1)} R(g_t) u_t &=& 0  \text{ on } \Omega \\ 
u_t &=& 1 \text{ on }\Sigma \\ 
\nu(u_t) &=& 0 \text{ on } \Sigma_0. 
\end{eqnarray*}
(The difference with the result in \cite{MiaoShiTam09} is the minimal boundary on which we are prescribing Neumann boundary data here.) Existence of 
such a $ u_t $ follows from the proof of (3.4) in \cite[Lemma 3.2]{SchoenYau79}.
That $ u_t $ is differentiable with respect to $t$ near $t = 0$ follows as in  \cite[pages 73-74]{SchoenYau79}.
Consider the metric $\bar g_t : = u_t^{\frac{4}{n-2}} g_t$. Then $\bar g_t$ competes with $g$ for least $\int_{\Sigma} \bar H$ and hence
\begin{eqnarray*}
0 = \frac{d}{dt}|_{t=0} \int_{\Sigma} \bar H_t =   \frac{2(n-1)}{n-2}  \frac{d}{dt}|_{t=0} \int_{\Sigma}\nu(u_t). 
\end{eqnarray*} 
Using the fact $ \nu(u_t) = 0  $ on $ \Sigma_0$,
integrating by parts and using the variation formula for scalar curvature $\frac{d}{dt}|_{t=0} R(g_t) = - \Delta \tr (h) + \div (\div h) - (h , Ric)$ this implies that
\begin{eqnarray*}
0 &=& \int_{\Omega} \frac{d}{dt}|_{t=0} \Delta_{g_t} u_t \\  &=& \frac{n-2}{4(n-1)} \int_\Omega - \Delta \tr (h) + \div (\div h) - (h, Ric)  \\ &=& - \frac{n-2}{4(n-1)} \int_{\Omega} (h, Ric). 
\end{eqnarray*}
Since this is true for all directions $h$ as above, this implies that $Ric \equiv 0$, as desired.
\end{proof}
\end{lemma}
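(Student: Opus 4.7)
The plan is to exploit the maximality hypothesis by building, for each interior-supported symmetric $(0,2)$-tensor $h$, a one-parameter family of admissible competitor metrics $\bar g_t$ obtained by conformally rescaling $g_t := g + th$. Since $g$ maximizes $\int_\Sigma \bar H \, d\sigma$ among nearby admissible metrics, the first variation along this family must vanish at $t=0$, and I will extract from this vanishing an integral identity $\int_\Omega \langle h, \Ric \rangle \, dv = 0$ valid for every interior-supported $h$, which forces $\Ric \equiv 0$.

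The first step is to fix $h$ with compact support in $\interior \Omega$, set $g_t = g + th$, and solve the mixed boundary value problem
\[
-\Delta_{g_t} u_t + \frac{n-2}{4(n-1)} R(g_t) u_t = 0 \text{ in } \Omega, \quad u_t = 1 \text{ on } \Sigma, \quad \nu(u_t) = 0 \text{ on } \Sigma_0.
\]
Existence of a positive solution for small $|t|$ and its smooth dependence on $t$ follow as in \cite[pp.~73--74]{SchoenYau79}. The conformal metric $\bar g_t := u_t^{4/(n-2)} g_t$ is then scalar flat; it agrees with $g$ on $T\Sigma$ because $h$ vanishes near $\Sigma$ and $u_t = 1$ there; and $\Sigma_0$ remains minimal in $\bar g_t$ thanks to the Neumann condition. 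Mean convexity of $\Sigma$ in $\bar g_t$ persists for small $|t|$ by continuity, so $\bar g_t$ is a bona fide competitor for $g$.

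Now I differentiate. Since $h \equiv 0$ near $\Sigma$, the mean curvature of $\Sigma$ in $g_t$ equals $H$, and the conformal change formula reduces on $\Sigma$ to $\bar H_t = H + \frac{2(n-1)}{n-2} \nu(u_t)$. Maximality then gives
\[
0 = \frac{d}{dt}\Big|_{t=0} \int_\Sigma \bar H_t \, d\sigma = \frac{2(n-1)}{n-2} \int_\Sigma \nu(\dot u) \, d\sigma,
\]
where $\dot u := \partial_t u_t|_{t=0}$. Differentiating the Neumann condition yields $\nu(\dot u) = 0$ on $\Sigma_0$, so the divergence theorem gives $\int_\Omega \Delta_g \dot u \, dv = 0$. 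Differentiating the PDE at $t=0$ and using $u_0 \equiv 1$, $R(g) \equiv 0$, and $\Delta_{g_t}(1) = 0$ yields $\Delta_g \dot u = \frac{n-2}{4(n-1)} \dot R$, hence $\int_\Omega \dot R \, dv = 0$. Inserting the standard first variation formula $\dot R = -\Delta \tr h + \div \div h - \langle h, \Ric \rangle$ and noting that the two divergence terms integrate away because $h$ is supported away from $\partial \Omega$, I obtain $\int_\Omega \langle h, \Ric \rangle \, dv = 0$ for every admissible $h$, hence $\Ric \equiv 0$. The main technical point is the well-posedness and smooth $t$-dependence of the mixed Dirichlet--Neumann problem above; this is less standard than the pure Dirichlet case but is essentially handled by adapting the argument of Schoen--Yau.
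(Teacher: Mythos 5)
Your proposal is correct and follows essentially the same route as the paper's own proof: the same mixed Dirichlet--Neumann conformal factor $u_t$ for $g_t = g + th$, the same first-variation identity $0 = \tfrac{2(n-1)}{n-2}\int_\Sigma \nu(\dot u)\, d\sigma$ from maximality, and the same reduction via the linearized equation and the scalar curvature variation formula to $\int_\Omega \langle h, \Ric\rangle\, dv = 0$. The only difference is that you spell out the admissibility checks (that $\bar g_t|_{T\Sigma} = g|_{T\Sigma}$, that $\Sigma_0$ stays minimal, and that mean convexity of $\Sigma$ persists) which the paper leaves implicit.
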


Finally, we need the following lemma, which is a slight extension of 
 \cite[Proposition 1]{HangWang07}. 
  
\begin{lemma} \label{lem: variant of HangWang}
Let $(\Omega, g)$ be a smooth compact connected Riemannian manifold of dimension $n \ge 2$ with smooth boundary $\partial \Omega = \Sigma_0 \dot \cup \Sigma$. We assume that $\Sigma_0$ is either empty or else has non-negative mean curvature with respect to the outward pointing unit normal. We assume that $\Sigma$ is non-empty and connected, and that there exists an isometric embedding $$ \iota : \Sigma \rightarrow \R^n $$ such that $\hat \Sigma:= \iota(\Sigma)$ is a closed embedded hypersurface of $\mathbb{R}^n$, and such that  $$ H \geq |\iota^*\hat H|.  $$ Here, $ H $ and $ \hat H$  denote the respective mean curvatures of $ \Sigma \subset \Omega$ and $ \hat \Sigma \subset \R^n$ with respect to the outward normals. If  $(\Omega, g)$ has non-negative Ricci curvature, then $\Sigma_0 = \emptyset$ and $(\Omega, g)$  is isometric to the (compact) domain bounded by $ \hat \Sigma $ in $ \R^n$.
\begin{proof} We only discuss the parts of the proof which differ slightly from \cite{HangWang07}. Consider the harmonic functions $X_i : \Omega \to \mathbb{R}$ with boundary values $\iota^*(x_i)$ on $\Sigma$, where $(x_1, \ldots, x_n)$ are coordinates on $\R^n$, and boundary value $0$ on $\Sigma_0$. As in \cite{HangWang07}, one computes using Reilly's B\^ochner formula that 

\begin{eqnarray*} \sum_{i = 1}^n \int_\Omega | \nabla^2 X^i |^2 &+& \sum_{i=1}^{n} \int_{\Sigma_0} H_{\Sigma_0} \nu(X^i)^2 \\ 
& \leq & -  \int_\Sigma  H \lf\{ 1 -  2  |( d X ( \nu ) \cdot \hat \nu )|  +  | d X ( \nu ) |^2 \ri\} \nonumber \\
& \leq &  -  \int_\Sigma H  \lf\{
\lf[  1 -   |d X ( \nu ) \cdot \hat \nu|  \ri]^2  +  \lf[ | d X ( \nu ) |^2 -  ( d X ( \nu ) \cdot \hat \nu )^2 
\ri] \ri\}
\nonumber \\
& \leq & 0 
\end{eqnarray*}
where the map $X: \Omega \to \R^n$ is defined as $X = (X_1, \ldots, X_n)$, where $dX$ is its tangent map, and where $\hat \nu$ is the unit normal vector pointing out of $\hat \Sigma \subset \R^n$. 
Since it is assumed that $H_{\Sigma_0}\geq0$ it follows as in \cite{HangWang07} that
\be \label{eq-parallelX1}
\nabla^2 X^i  = 0 \ \mathrm{on} \ \Omega, \ \forall i = 1, \ldots, n
\ee
and, since $H \geq |\hat H|$ is greater than zero at least at one point, 
\be \label{eq-parallelX2}
 |d X ( \nu ) \cdot \hat \nu|  = 1, 
\ \  
| d X ( \nu ) |^2  =  ( d X ( \nu ) \cdot \hat \nu )^2  \ \text { at some point  } \ p \in  \Sigma,
\ee
\noindent which implies readily that $g \equiv\sum_{i, j=1}^n \delta_{ij} d X^i \otimes d X^j \equiv X^*(\delta_{ij})$ on $\Omega$. 
Note that since $n \geq 2$ and $X_i|_{\Sigma_0} = 0$, this is only possible if $\Sigma_0 = \emptyset$. 
In that case, it follows that $X : (\Omega, g) \to (\mathbb{R}^n, \delta)$ is a local isometry. In particular, a collar neighbourhood of $\partial \Omega = \Sigma$ is isometric to a collar neighbourhood of $\iota (\Sigma) = \hat \Sigma \subset \R^n$ so that $\Omega$ can be glued smoothly to $\R^n$. Since $\Omega$ is flat, it follows that $\Omega$ is a domain in $\R^n$. 

\end{proof}
\end{lemma}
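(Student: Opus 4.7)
The plan is to adapt the Reilly-formula argument of Hang and Wang \cite{HangWang07} to the mixed-boundary setting forced on us by the extra component $\Sigma_0$. Fix Euclidean coordinates $(x_1, \dots, x_n)$ on $\R^n$ and, for each $i$, let $X^i : \Omega \to \R$ be the harmonic function with $X^i|_\Sigma = \iota^*(x_i)$ and $X^i|_{\Sigma_0} = 0$; existence and smoothness are standard for this linear elliptic Dirichlet problem. I would then apply Reilly's integral identity to each $X^i$ and sum over $i$: harmonicity kills the $(\Delta u)^2$ term and the hypothesis $\Ric \ge 0$ makes the interior Ricci term have the correct sign, so $\sum_i \int_\Omega |\nabla^2 X^i|^2$ is bounded above by a sum of boundary integrals.

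Next I would process the two pieces of $\partial\Omega$ separately. On $\Sigma_0$, the Dirichlet condition $X^i \equiv 0$ kills all tangential derivatives of $X^i$ along $\Sigma_0$, leaving only the term $H_{\Sigma_0}\,\nu(X^i)^2$; by hypothesis $H_{\Sigma_0} \ge 0$, so this term is nonnegative and moves to the favorable side of the inequality. On $\Sigma$, since $\iota$ is an isometry the tangential data of $X^i$ agrees with that of the Euclidean coordinate $x_i$ on $\hat\Sigma$, and standard identities for coordinate functions restricted to a hypersurface in $\R^n$ let one reduce the $\Sigma$-boundary integrand to an expression involving only $H$, $\hat H$, and the scalar $dX(\nu)\cdot\hat\nu$. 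Completing the square exactly as in \cite{HangWang07} and invoking the pointwise bound $H \ge |\iota^*\hat H|$ then produces the chain of inequalities displayed in the statement, ending in $\le 0$.

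Comparing the two sides I conclude $\nabla^2 X^i \equiv 0$ on $\Omega$ for every $i$, so the map $X = (X^1, \dots, X^n):\Omega \to \R^n$ has parallel differential. Since $\hat\Sigma$ is a closed embedded hypersurface of $\R^n$, there is at least one point where $\hat H > 0$ (for instance, a point of contact with a supporting hyperplane of the enclosed region), hence $H > 0$ at the corresponding point of $\Sigma$; the pointwise equality clauses of the chain above then force $dX(\nu)$ to be a unit vector aligned with $\hat\nu$ at that point, and parallelism of $dX^i$ propagates the identity $g = \sum_i dX^i \otimes dX^i = X^*\delta$ to all of $\Omega$. Thus $X$ is a local isometry.

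Finally, I would rule out $\Sigma_0$ and identify $\Omega$ with the enclosed Euclidean domain. Because a local isometry is a local diffeomorphism, $X^{-1}(0)$ is discrete in $\Omega$; but $X$ vanishes on all of $\Sigma_0$, which has dimension $n - 1 \ge 1$, forcing $\Sigma_0 = \emptyset$. With $\partial\Omega = \Sigma$ alone, $X$ restricts to $\iota$ on the boundary, so a collar of $\Sigma$ in $(\Omega,g)$ is isometric to a collar of $\hat\Sigma$ in $\R^n$ and $\Omega$ glues smoothly to the exterior of $\hat\Sigma$ along $\hat\Sigma$; flatness and connectedness then identify $(\Omega,g)$ with the compact domain bounded by $\hat\Sigma$. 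The main obstacle I anticipate is the rearrangement of the $\Sigma$-boundary integral, because the weakened hypothesis $H \ge |\iota^*\hat H|$ (rather than the equality used in some earlier references) requires the precise completion-of-squares trick of \cite{HangWang07} to produce the sign that closes the argument; the minor new ingredient here is simply that the Dirichlet choice $X^i|_{\Sigma_0}=0$ combines cleanly with $H_{\Sigma_0}\ge 0$ to yield a nonnegative, rather than arbitrary, contribution from $\Sigma_0$.
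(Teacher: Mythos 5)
Your proposal is correct and follows essentially the same route as the paper: the same harmonic extensions $X^i$ with Dirichlet data $\iota^*(x_i)$ on $\Sigma$ and $0$ on $\Sigma_0$, the same Reilly/B\^ochner identity with $\Ric \ge 0$, the same observation that the $\Sigma_0$ contribution reduces to the sign-favorable term $H_{\Sigma_0}\nu(X^i)^2$, the Hang--Wang completion of squares on $\Sigma$ using $H \ge |\iota^*\hat H|$, and the conclusion $g = X^*\delta$ forcing $\Sigma_0 = \emptyset$ (your discreteness-of-$X^{-1}(0)$ phrasing is just a repackaging of the paper's remark that $dX$ cannot kill the $(n-1)$-dimensional tangent spaces of $\Sigma_0$) followed by the collar-gluing identification of $(\Omega,g)$ with the domain bounded by $\hat\Sigma$.
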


We are now in a position to prove the rigidity statement in Proposition \ref{prop: main} without the spin assumption: Suppose that $\int_{\Sigma_i} H= \int_{\hat \Sigma_i} \hat H$ for some $i \in \{1, \ldots, p \}$. If $p \geq 2$, let $\Omega_i \subset \Omega$ be a domain as in Lemma \ref{lem: GMT lemma} such that $\partial \Omega_i = \Sigma_0 \dot \cup \Sigma_i$ where $\Sigma_0$ is a minimal surface. 
%
We can use Lemma \ref{lem: ShiTam with minimal boundaries} to
justify the use of Lemma \ref{lma: R-zero-Omega_i} and 
Lemma \ref{lem: extension MiaoShiTam} to conclude that 
$H = \hat H$ on $\Sigma_i$, $R(g) \equiv 0$ on $\Omega_i$, and in fact
$\Ric(g) \equiv 0$ on $ \Omega_i$.
By Lemma \ref{lem: variant of HangWang} we conclude that $\Sigma_0 = \emptyset$. So there could only have been one boundary component $\Sigma_1$ in the first place. Then Lemmas \ref{lem: extension MiaoShiTam} and \ref{lem: variant of HangWang} show that $(\Omega, g)$ is isometric to a domain in $ \R^n $.  

\begin{remark}
In the above approach, we followed the idea 
in \cite[Section 3]{SchoenYau79} to show that, if equality in  \eqref{prop: main} holds, then  $  \Omega $ must have a single boundary component.  On the other hand,  the presence of a minimal surface in Lemma \ref{lem: GMT lemma} also suggests that one can apply  the higher dimensional Riemannian Penrose Inequality \cite{BrayLee09}  to directly prove that the strict inequality in   \eqref{prop: main}  must hold whenever  $ \p \Omega $ has more than one components. This second approach can be made rigorous by the arguments in \cite[Section 3.2]{Miao08}. 
\end{remark}

\section{Results of Gerhardt and Urbas} \label{Gerhardt-Urbas}

To finish the proof of Theorem \ref{thm: main theorem}, we recall the following result on expanding star-shaped surfaces in $ \R^{n} $ into spheres, which was obtained by
Gerhardt \cite{Gerhardt90} and  Urbas \cite{Urbas90}. 

\begin{theorem} {\rm (Gerhardt, Urbas)} \label{thm: Gerhardt and Urbas}
Let ${\hat \Sigma} $ be a smooth, closed, compact hypersurface in $ \R^{n}$, given by 
a smooth embedding $\iota : \Sigma^{n-1} \rightarrow \R^{n} $, and suppose that $ {\hat \Sigma} $ is star-shaped
with respect to a point $P_0 \in \R^n$.
Let $ \Gamma \subset \R^{n-1} $ be an open, convex, symmetric cone with vertex at the origin,
which contains the positive cone $ \Gamma^+ = \{ (\lambda_1, \ldots, \lambda_{n-1} ) \ | \ 
\lambda_i > 0, \ \forall i \}$. Let $ f \in \mathcal{C}^\infty( \Gamma ) \cap \mathcal{C}^0( \bar{\Gamma} ) $ 
be a symmetric function satisfying 
\begin{enumerate} 
\item $ f $ is homogeneous of degree one 
\item $ \frac{ \p f}{\p \lambda_i } > 0 $ and $ f $ is concave 
\item $ f> 0$ on $\Gamma$ and $f \equiv 0 $ on $ \p \Gamma $.
\end{enumerate} 
Suppose $ f( \kappa_1 ( \sigma), \ldots, \kappa_{n-1} ( \sigma) ) > 0$,
where $ \{ \kappa_i \}_{i=1}^{n-1}$ are the principal curvatures 
of $ {\hat \Sigma} $ at  $ \sigma \in {\hat \Sigma} $. Then the initial value problem
\begin{equation} \label{eq-initial} 
\left\{
\begin{split}
\frac{ \p F}{ \p t}  = & \ \frac{1}{ f(\sigma, t) } \nu(\sigma, t)  \\
F( \sigma, 0)  = & \iota (\sigma)
\end{split} 
\right.
\end{equation}
has a unique smooth solution $ F : \Sigma^{n-1} \times [0, \infty) \rightarrow \R^{n} $, where
$ \nu( \sigma, t) $ is the outer unit normal vector field to $F(\Sigma, t)$ at $F(\sigma, t)$, and
$$ f(\sigma, t) = { f ( \kappa_1, \ldots, \kappa_{n-1} ) } ,$$
where $ \{ \kappa_i \} $  are the principal curvatures of $F(\Sigma, t)$ at $F(\sigma,t)$. 
Moreover, for each $ t$, $F(\Sigma, t)$ is a star-shaped surface with respect to $P_0$, and
$ e^{-t} F(\Sigma, t) $ converges to a sphere centered at $ P_0 $ in the $ \mathcal{C}^\infty$ topology
as $ t \rightarrow \infty$. 
\end{theorem}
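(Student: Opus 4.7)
Without loss of generality take $P_0 = 0$. The plan is to reduce (\ref{eq-initial}) to a scalar parabolic equation for the radial graph of $F(\Sigma,t)$, to establish long-time existence via uniform a priori estimates, and then to analyze the rescaled flow to obtain the spherical limit.

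\emph{Reduction to a scalar PDE and barriers.} Star-shapedness lets us write $F(\sigma,t) = r(\sigma,t)\,\omega(\sigma)$ with $\omega : \Sigma^{n-1} \to \mathbb{S}^{n-1}$ a fixed diffeomorphism, and the principal curvatures of $F(\Sigma,t)$ can be expressed explicitly in terms of $r$, $\nabla r$ and $\nabla^2 r$. Setting $\varphi := \log r$ turns (\ref{eq-initial}) into a scalar evolution equation for $\varphi$ on $\mathbb{S}^{n-1}$ that is strictly parabolic (by $\partial f/\partial \lambda_i > 0$) and concave in $\nabla^2 \varphi$ (by concavity of $f$). Because $f$ is homogeneous of degree $1$, a concentric round sphere of radius $R$ evolves as $\dot R = R/c$, where $c := f(1,\dots,1)$, and hence grows exponentially. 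Sandwiching $\hat \Sigma$ between two such spheres at $t=0$ and invoking the scalar maximum principle gives $c_1 e^{t/c} \leq r(\sigma,t) \leq c_2 e^{t/c}$, which is both the $C^0$ estimate and the first hint of the asymptotic profile.

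\emph{Gradient and curvature estimates.} The $C^1$ bound $|\nabla \varphi| \leq C$ follows from a maximum principle argument applied to a carefully chosen function of $\nabla \varphi$ and the support function of a comparison sphere, again exploiting homogeneity of $f$. The decisive step is bounding the principal curvatures $\{\kappa_i\}$ uniformly inside a compact subset of $\Gamma$; this is achieved by deriving an evolution inequality for an auxiliary quantity comparable to $\log f - \log\langle F,\nu\rangle$, where the concavity of $f$, the vanishing of $f$ on $\partial\Gamma$, and star-shapedness all enter crucially. Once the $\kappa_i$ are trapped in a compact subset of $\Gamma$, the equation is uniformly parabolic and concave, so the Evans--Krylov theorem provides interior $\mathcal{C}^{2,\alpha}$ bounds, and Schauder bootstrapping yields uniform $\mathcal{C}^\infty$ estimates. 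Short-time existence is standard, and these bounds allow continuation of the flow to $[0,\infty)$.

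\emph{Convergence to a sphere.} Introduce the rescaled embedding $\tilde F(\cdot,t) := e^{-t/c} F(\cdot,t)$ and the corresponding radial function $\tilde r = e^{-t/c} r$. The estimates of the previous paragraph pass to $\tilde F$, and one proves that a scale-invariant quantity --- for instance the oscillation $\max_\sigma \tilde r / \min_\sigma \tilde r$, or equivalently the trace-free part of the rescaled second fundamental form --- decays along the flow. Combined with the uniform $\mathcal{C}^\infty$ bounds this forces $e^{-t/c} F(\Sigma,t)$ to converge subsequentially smoothly to a round sphere centered at $P_0$; uniqueness of the limiting radius together with compactness of the bounded orbit upgrades this to full convergence in $\mathcal{C}^\infty$.

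\emph{Main obstacle.} The crux is the uniform curvature bound keeping $(\kappa_1,\dots,\kappa_{n-1})$ in a fixed compact subset of $\Gamma$ for all $t \geq 0$. Without it the operator could degenerate where $f$ vanishes on $\partial\Gamma$ and all of the parabolic machinery would collapse; everything else --- barriers, gradient bounds, Krylov-type regularity, and the exponential-rate asymptotics --- rests on this estimate, which is precisely the place where degree-one homogeneity, concavity of $f$, and star-shapedness must be exploited in concert.
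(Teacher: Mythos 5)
The paper does not actually prove this theorem: it is quoted from Gerhardt \cite{Gerhardt90} and Urbas \cite{Urbas90} and used as a black box in Section \ref{Gerhardt-Urbas}, so there is no in-paper argument to measure your write-up against. That said, your outline does reproduce the architecture of the original proofs: writing $F=r\,\omega$ as a radial graph over $\mathbb{S}^{n-1}$, passing to $\varphi=\log r$ to obtain a scalar, strictly parabolic, concave equation; $\mathcal{C}^0$ control by sandwiching between concentric spheres, which expand exponentially because $f$ is homogeneous of degree one; gradient and curvature estimates; Evans--Krylov plus Schauder bootstrapping for uniform higher-order bounds and long-time existence; and convergence of the rescaled flow to a round sphere. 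One point where you are arguably more careful than the quoted statement: the barrier computation gives $R(t)=R_0e^{t/c}$ with $c=f(1,\ldots,1)$, so the natural rescaling is $e^{-t/c}F$, as you write; the factor $e^{-t}$ in the statement presupposes a normalization of $f$ (or of the time variable), and note that the specific $f$ used later in the paper, in \eqref{eq-ourf}, has $f(1,\ldots,1)=n-2$.

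As a proof, however, what you have is a roadmap rather than an argument: the two steps that carry essentially all of the difficulty are asserted, not derived. First, the uniform estimate confining $(\kappa_1,\ldots,\kappa_{n-1})$ to a compact subset of $\Gamma$ --- which you correctly single out as the crux --- is described only as ``an evolution inequality for an auxiliary quantity comparable to $\log f-\log\langle F,\nu\rangle$''; in \cite{Gerhardt90} and \cite{Urbas90} this is a delicate maximum-principle computation involving the support function $\langle F,\nu\rangle$ (whose positivity, i.e.\ preservation of star-shapedness, must itself be proved along the flow, not assumed), the gradient function, and the structure conditions (1)--(3) on $f$, and it occupies the bulk of those papers. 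Second, the convergence statement needs a quantitative mechanism: you invoke decay of a scale-invariant oscillation ``or equivalently'' of the trace-free rescaled second fundamental form, but no evolution inequality or decay rate is given, and ``uniqueness of the limiting radius'' upgrading subsequential to full $\mathcal{C}^\infty$ convergence is not justified. Until those two estimates are actually carried out, the plan is plausible and historically accurate but not a proof; if your intention is, like the authors', simply to use the result, the correct move is to cite \cite{Gerhardt90} and \cite{Urbas90} rather than to sketch their arguments.
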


Now suppose $ {\hat \Sigma} = \iota (\Sigma) \subset \R^n$ is a smooth closed embedded hypersurface, star-shaped and such that
\begin{equation}
H = \sum_{i=1}^{n-1} \kappa_i  > 0 , \ \ 
2 K  = ( \sum_{i=1}^{n-1} \kappa_i )^2 - \sum_{i=1}^{n-1} \kappa_i^2 > 0,
\end{equation}
i.e., the surface has positive mean and scalar curvatures. Let  $ \Gamma \subset \R^{n-1} $ be the open, convex, connected symmetric cone
\begin{equation}
\Gamma = \left\{ ( \lambda_1, \ldots, \lambda_{n-1} ) \ | \ 
\sum_{i=1}^{n-1} \lambda_i > 0 \ \mathrm{and} \ 
( \sum_{i=1}^{n-1} \lambda_i )^2 - \sum_{i=1}^{n-1} \lambda_i ^2 > 0 
\right\}.
\end{equation}
On $ \Gamma $, let $ f $ be the smooth, positive function defined by
\begin{equation} \label{eq-ourf}
f ( \lambda_1, \ldots, \lambda_{n-1} ) =  \frac{ ( \sum_{i=1}^{n-1} \lambda_i )^2 
- \sum_{i=1}^{n-1} \lambda_i ^2 }{ \sum_{i=1}^{n-1} \lambda_i  }  .
\end{equation}
It is easily seen that $ f $ can be continuously extended
to $ \bar{\Gamma} $ such that $ f \equiv 0 $ on $ \p \Gamma $. We leave it as an entertaining exercise for the reader to verify that $f$ satisfies the conditions in Theorem $\ref{thm: Gerhardt and Urbas}$. (See the introduction of \cite{Urbas90} for reference on the application of the theorem for symmetric functions of the principle curvatures.) It follows that there exists a smooth map $ F: \Sigma \times [0, \infty) \rightarrow \R^n $ such that $ F( {\Sigma}, 0) = {\hat \Sigma} $,
 \begin{equation}
  \frac{\p F}{\p t} = \frac{ \sum_{i=1}^{n-1} \kappa_i  } { ( \sum_{i=1}^{n-1} \kappa_i )^2 - \sum_{i=1}^{n-1} \kappa_i ^2 }  \nu = \frac{H}{R} \nu
  \end{equation}
where $ \nu  $ is the outer unit normal to the closed hypersurface $F( {\hat \Sigma}, t)$,  $\{ \kappa_i \}$ are its principal curvatures, $H$ its mean curvature, and $R$ its scalar curvature. Moreover, 
$e^{-t} F(\Sigma, t) $ converges to a sphere  in the $ \mathcal{C}^\infty$ topology as $ t \rightarrow \infty$. In particular it follows that  $F(\Sigma, t)$ has positive mean and scalar curvatures throughout the evolution, and that it is a strictly convex hypersurface for $t$ sufficiently large. 

\begin{remark}
An obvious approach to proving Theorem \ref{thm: main theorem} is to use this $\frac{H}{R}$-flow of Gerhardt and Urbas to connect $\hat \Sigma_i$ to `round spheres at infinity' and to carry out the program of Shi and Tam along this foliation without using their distance surface foliation once $\hat \Sigma_i$ is deformed to a strictly convex set. While this approach might have some independent interest, it would require detailed knowledge of the asymptotics of the $\frac{H}{R}$-flow to ensure that the resulting metric is indeed asymptotically flat, and that the Brown-York mass of the leaves converges to its mass.
\end{remark}

\bibliographystyle{amsplain}

\end{document}